\documentclass[11.5pt,oneside,reqno]{amsart}
\usepackage[utf8]{inputenc}
\usepackage[english]{babel}
\usepackage[margin=1in]{geometry}
\usepackage{amsmath,amssymb,amsthm,amsfonts}
\usepackage{csvsimple,array,newclude,longtable}
\setlength{\parskip}{0.8em}
\usepackage{xcolor}
\usepackage{hyperref}
\hypersetup{
    allcolors = black,
    linkbordercolor = blue,
}
\AtBeginDocument{\hypersetup{pdfborder={0 0 1}}}
\usepackage{cleveref}
\crefformat{section}{\S#2#1#3} 
\crefformat{subsection}{\S#2#1#3}
\crefformat{subsubsection}{\S#2#1#3}
\linespread{1.3}
\newtheorem{theorem}{Theorem}
\newtheorem*{utheorem}{Theorem}

\newtheorem{lemma}[theorem]{Lemma}
\newtheorem{defn}{Definition}
\newtheorem{prop}{Proposition}
\newtheorem{conj}{Conjecture}
\newcommand{\ZZ}{\mathbb{Z}}
\newcommand{\NN}{\mathbb{N}}
\newcommand{\chart}[1]{\text{char}(#1)}
\newcommand{\gen}[1]{\text{gen}(#1)}

\newcommand{\Norm}[1]{N(#1)}
\newcommand{\HH}{\mathbb{H}}

\newcommand{\pn}{(p,n)}
\renewcommand{\vec}[1]{\mathbf{#1}}
\theoremstyle{remark}
\newtheorem*{rem}{Remark}
\pagestyle{plain}

\title{On Characteristics of Hyperfields Obtained as Quotients of Finite Fields}
\author{Antonio Frigo \and Hahn Lheem \and Dylan Liu\\\\Proposed by: Matthew Baker\\ Counselor: Oron Propp}

\begin{document}

\begin{abstract}
Hyperstructures are a natural extension of regular algebraic structures in which one of the operations, known as the hyperoperation, is multivalued; a hyperfield is such an extension on a field. M. Krasner (1962) proved that the quotient $\mathbb{F}_p/G$, where $G$ is a subgroup of units in $\mathbb{F}_p$ is a hyperfield. The characteristic of a field may be explicitly determined from the order of the field, but there are no existing generalizations for determining the characteristic of a hyperfield of the form $\mathbb{F}_p/G$. We show that for odd primes $p$, there exists an explicit form for the characteristic of the hyperfield $\mathbb{F}_p/G$ and $|G|=1,2,3,4$. Finally, we prove a general form of the characteristic for hyperfields where $|G|$ is prime. 
\end{abstract}

\maketitle

\section{Introduction}\label{sec:intro}
Hyperfields are a natural generalization of fields. Instead of the addition operation taking two elements and producing exactly one output, the $\boxplus$ operation takes input of two sets and outputs a set in return. In our paper, we aim to study hyperfields constructed from $\ZZ/p\ZZ$ in the method described by Krasner \cite{krasner}. We consider the motivating question:
\begin{center}
\textit{Characterize the submonoids $S\subseteq \NN$ for which $S$ is the characteristic of a hyperfield. }
\end{center}
The characteristic of a hyperfield is analogous to that of a field, except that the characteristic is a submonoid of $\NN$ rather than a natural number. \par 

Our paper is structured as follows. We first provide the necessary definitions and prove basic propositions in \cref{sec:defs}. Next, we prove explicit forms of generating sets of type $\pn-$hyperfields for specific values of $n$, along with some computational data. We then prove a finite criterion for characteristics when $n$ is prime in \cref{sec:finitecriterion}. Computational data for all $\pn-$hyperfields of primes $p$ up to 200 can be found in Appendix \ref{sec:appen1}.\par
We will prove the following theorems in \cref{sec:gensetspn} and \cref{sec:finitecriterion}:
\begin{utheorem}
Let $\HH$ be an $\pn-$hyperfield with $|G|=1$ and $p\ge 2$. Then $\gen{\HH}=\{p\}$. 
\end{utheorem}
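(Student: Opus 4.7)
The plan is to observe that when $|G|=1$, the Krasner quotient construction collapses to the field $\mathbb{F}_p$ itself with a single-valued hyperaddition. Since $G=\{1\}$, every orbit in $\mathbb{F}_p$ under the action of $G$ is a singleton, so the hyperclass of each $a \in \mathbb{F}_p$ is $\{a\}$, and $a \boxplus b = \{a+b\}$. Thus $\mathbb{H}$ is effectively $\mathbb{F}_p$ with ordinary addition repackaged as a single-valued hyperoperation.

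Next I would unpack the definition of the characteristic submonoid of $\mathbb{H}$: it consists of those $k \in \NN$ for which $0$ lies in the $k$-fold hypersum $1\boxplus 1\boxplus\cdots\boxplus 1$. Because the hyperaddition on $\mathbb{H}$ is just ordinary addition in $\mathbb{F}_p$, this $k$-fold hypersum is the singleton $\{k \bmod p\}$, and therefore contains $0$ precisely when $p \mid k$. Hence the characteristic submonoid is $\chart{\mathbb{H}} = p\NN$.

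Finally, I would verify that the generating set of $p\NN$ as a submonoid of $\NN$ is $\{p\}$: the element $p$ is the smallest positive element of $p\NN$, so it cannot be written as a sum of two or more positive elements of $p\NN$, forcing $p$ to belong to any generating set; on the other hand, $\{p\}$ clearly generates all of $p\NN$. Therefore $\gen{\mathbb{H}} = \{p\}$.

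The only nontrivial step is the first one, namely confirming that $\mathbb{F}_p/G$ really does coincide with the ``trivial'' hyperfield structure on $\mathbb{F}_p$ when $G$ is trivial; but this should fall out immediately from the explicit description of quotient hyperfields given in \cref{sec:defs}. Everything else is a direct computation in $\NN$, so I do not expect any serious obstacle.
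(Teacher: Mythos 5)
Your proof is correct and takes essentially the same route as the paper: the paper invokes Theorem \ref{thm:partition} to reduce to the condition $n\cdot 1=0$ in $\ZZ/p\ZZ$, i.e.\ $p\mid n$, which is exactly the computation you carry out directly after noting that the quotient by the trivial group is just $\mathbb{F}_p$ with singleton classes. The only cosmetic difference is that your reduction to checking the single element $1$ is the content of Proposition \ref{prop:checkfirst}, which you could cite rather than rederive.
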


\begin{utheorem}
Let $\HH$ be an $\pn-$hyperfield with $|G|=2$ and $p\ge 3$. Then $\gen{\HH}=\{2, p\}$. 
\end{utheorem}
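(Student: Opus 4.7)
My plan is to give an explicit description of hyperaddition on $\HH = \mathbb{F}_p/G$ in the $|G|=2$ case and then read off $\chart{\HH}$ directly. Since $p$ is odd, $\mathbb{F}_p^\times$ has a unique subgroup of order $2$, so I may take $G=\{\pm 1\}$, and the equivalence class of $1$ in $\HH$ is $\{1,-1\}$. By Krasner's construction of the quotient hyperfield, $n \in \chart{\HH}$ if and only if there exist signs $\epsilon_1,\ldots,\epsilon_n \in \{\pm 1\}$ with $\epsilon_1 + \cdots + \epsilon_n \equiv 0 \pmod{p}$.

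I would then split on the parity of $n$. For even $n \ge 2$, choosing half the $\epsilon_i$ to be $+1$ and the other half $-1$ gives sum exactly $0$, so every even $n \ge 2$ lies in $\chart{\HH}$. For odd $n$, the sum $\sum \epsilon_i$ is itself odd, so since $|\sum \epsilon_i| \le n$ the congruence $\sum \epsilon_i \equiv 0 \pmod p$ forces the sum to be a nonzero odd multiple of $p$, which in particular requires $n \ge p$. Conversely, if $n \ge p$ is odd, taking $(n+p)/2$ of the $\epsilon_i$ to be $+1$ and $(n-p)/2$ to be $-1$ yields sum $p \equiv 0$. I conclude that $\chart{\HH}$ consists of $0$, all even $n \ge 2$, and all odd $n \ge p$.

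Reading off the minimal generators is then immediate: $\{2, p\}$ generates the monoid, since every even element is a sum of $2$'s and every odd element $\ge p$ is $p$ plus a sum of $2$'s, and neither generator can be dropped because $2$ is the smallest positive element of the monoid and $p$ is the smallest odd one. I do not anticipate a real obstacle, since the argument reduces to a short sign-counting exercise; the step that will require the most care is the correct unwinding of the quotient hyperoperation into signed sums modulo $p$, together with verifying that the edge cases $n\in\{0,1\}$ are handled as expected.
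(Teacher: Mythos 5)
Your proposal is correct and follows essentially the same route as the paper: both reduce, via the partition criterion for quotients $\mathbb{F}_p/\{\pm 1\}$, to deciding when a sum of $n$ signs $\pm 1$ can vanish modulo $p$, and both conclude that the characteristic is the monoid $\{2b+kp\}$ (equivalently, all even $n\ge 2$ together with all odd $n\ge p$), generated by $\{2,p\}$. Your parity case analysis and the explicit minimality check are just a slightly more detailed write-up of the paper's parametrization $a=b+kp$, $a+b=2b+kp$.
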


\begin{utheorem}
Let $\HH$ be an $\pn-$hyperfield with $|G|=3$ and $p\ge 7$. Then there always exists positive integral solutions $(a,b)$ to $a^2-ab+b^2=p$. Let $m,n$ be the two possible values of $a+b$. Then the generating set is in the form $\{3,m,n \}$.
\end{utheorem}
\begin{utheorem}
Let $\HH$ be an $\pn-$hyperfield with $|G|=4$ and $p\ge 5$. Let $(a,b)$ be the positive integral solution to $a^2+b^2=p$ and let $n=a+b$. Then the generating set is in the form $\{2,n \}$.
\end{utheorem}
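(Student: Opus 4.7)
The plan is to identify $G$ explicitly, translate ``$0 \in k \cdot 1$'' into a Gaussian-integer-style congruence over $\mathbb{F}_p$, and then determine the generating set by combining Fermat's two-squares theorem with a short minimality argument.

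Since $|G| = 4$ and $p$ is odd, we have $p \equiv 1 \pmod 4$, so $\mathbb{F}_p$ contains a primitive fourth root of unity $\omega$ with $\omega^2 = -1$, and $G = \{\pm 1, \pm\omega\}$. A sum of $k$ elements of $G$ has the form $(\alpha - \gamma) + (\beta - \delta)\omega$ with $\alpha, \beta, \gamma, \delta \geq 0$ and $\alpha + \beta + \gamma + \delta = k$. I would first establish the equivalence: $k \in \chart{\HH}$ iff there exist integers $x, y$ with $|x| + |y| \leq k$, $|x| + |y| \equiv k \pmod 2$, and $p \mid x^2 + y^2$. The forward direction is immediate by taking $x = \alpha - \gamma$ and $y = \beta - \delta$; the converse uses that $p \mid (x + y\omega)(x - y\omega)$ forces one factor to vanish mod $p$, while $-\omega \in G$ lets us absorb the sign in $y$.

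Next I would show that both $2$ and $n = a + b$ (with $a^2 + b^2 = p$, $a,b > 0$) lie in $\chart{\HH}$: the value $2$ comes from $1 + (-1) \equiv 0$, and $n$ is realized by $a$ copies of $1$ together with $b$ copies of $+\omega$ or $-\omega$, choosing the sign so that $a \pm b\omega \equiv 0 \pmod p$. Because $a^2 + b^2 = p$ is odd, exactly one of $a, b$ is even, so $n$ is odd; combined with $n \geq 3 > 2$, this already makes $\{2, n\}$ minimal as a generating set. It then remains to show $\langle 2, n \rangle = \chart{\HH}$: every even $k \geq 2$ is trivially in $\langle 2 \rangle$, and since $\gcd(2, n) = 1$, every odd $k \geq n$ can be written as $n + 2 \cdot \frac{k - n}{2} \in \langle 2, n \rangle$. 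So the heart of the argument reduces to showing that every odd $k \in \chart{\HH}$ satisfies $k \geq n$.

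The main step, and the one I expect to carry the real content, is the lemma: \emph{if $(x, y) \in \mathbb{Z}^2 \setminus \{(0, 0)\}$ satisfies $p \mid x^2 + y^2$, then $|x| + |y| \geq n$}. I would split on $m := (x^2 + y^2)/p \geq 1$. For $m = 1$, the classical uniqueness (up to signs and swap) of the representation of an odd prime as a sum of two squares forces $\{|x|, |y|\} = \{a, b\}$, so $|x| + |y| = n$ exactly. For $m \geq 2$ the chain
\[
(|x| + |y|)^2 \;\geq\; x^2 + y^2 \;=\; pm \;\geq\; 2p \;>\; p + 2ab \;=\; (a + b)^2 \;=\; n^2
\]
closes things, where $2p > p + 2ab$ uses that $p$ odd forces $a \neq b$, hence $(a - b)^2 > 0$, hence $2ab < a^2 + b^2 = p$. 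Integrality then upgrades $|x| + |y| > n$ to $|x| + |y| \geq n$, and combined with $k \geq |x| + |y|$ from the parity-matching setup this finishes the proof. The only non-routine ingredient is the uniqueness of the representation $p = a^2 + b^2$ for odd primes, which is classical; everything else is sign bookkeeping and elementary inequalities, so I anticipate no serious obstacle beyond writing these out cleanly.
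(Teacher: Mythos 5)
Your proposal is correct and takes essentially the same route as the paper: obtain $2$ from $1+(-1)$, reduce membership in the characteristic to a congruence $x+y\omega\equiv 0 \pmod p$, i.e.\ to $p\mid x^2+y^2$, and show the minimal odd value is $n=a+b$ by combining the uniqueness of $p=a^2+b^2$ with the inequality $(a+b)^2=p+2ab<2p\le kp\le(|x|+|y|)^2$ for $k\ge 2$. Your write-up is somewhat more careful than the paper's (the explicit parity/padding equivalence and the strict inequality from $a\neq b$), but the decomposition and key estimate are the same.
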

\begin{utheorem}
Let $q$ be a prime. Let $\zeta_q$ be the primitive $q$-th root of unity. The chracteristic of the $(p,q)$-hyperfield is generated by the set $$\{p,q\} \bigcup \left \{\left. \sum_{i=0}^{q-2} a_i \text{ } \right \vert N\left( {\sum_{i=0}^{q-2} a_i\zeta_q^i}\right)\equiv 0\pmod p \right \},$$such that $a_i$ are integers with $0 \leq a_i < p$.
\end{utheorem}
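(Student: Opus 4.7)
The plan is to show both inclusions between $\chart{\HH}$ and the monoid generated by the stated set $T$.

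First I would give the characteristic a hands-on description. Unrolling the hyperoperation, $n \in \chart{\HH}$ iff $0 = g_1 + \cdots + g_n$ in $\mathbb{F}_p$ with each $g_k \in G$, equivalently, iff one can choose non-negative integers $a_0, \ldots, a_{q-1}$ with $\sum_i a_i = n$ and $\sum_{i=0}^{q-1} a_i \omega^i \equiv 0 \pmod{p}$, where $\omega \in \mathbb{F}_p$ is any primitive $q$-th root of unity (available since $q \mid p-1$).

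Next I would bridge this to the norm in $\mathbb{Z}[\zeta_q]$. Because $p \equiv 1 \pmod{q}$, one has $(p) = \mathfrak{p}_1 \cdots \mathfrak{p}_{q-1}$ in $\mathbb{Z}[\zeta_q]$, with each residue field isomorphic to $\mathbb{F}_p$ and $\zeta_q$ reducing modulo $\mathfrak{p}_j$ to some primitive $q$-th root $\omega_j \in \mathbb{F}_p$; as $j$ varies, the $\omega_j$ range over all generators of the same subgroup $G$. For $\alpha = \sum a_i \zeta_q^i$, primality of the $\mathfrak{p}_j$ together with $\Norm{\alpha} = \prod_\sigma \sigma(\alpha)$ gives $\Norm{\alpha} \equiv 0 \pmod{p}$ iff $\alpha \in \mathfrak{p}_j$ for some $j$ iff $\sum_i a_i \omega_j^i \equiv 0 \pmod{p}$ for some primitive $q$-th root $\omega_j$, which by the previous paragraph is exactly the condition for $\sum_i a_i$ to lie in $\chart{\HH}$. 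In particular, each generator of the third type lies in $\chart{\HH}$; so does $p$ (witness $(p, 0, \ldots, 0)$) and $q$ (witness $(1, 1, \ldots, 1)$, using $1 + \omega + \cdots + \omega^{q-1} = 0$).

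For the reverse inclusion I would take an arbitrary witness $(b_0, \ldots, b_{q-1})$ of $n$ and reduce it to the standard form in three steps. (i) Subtract $m := \min_i b_i$ from every coordinate; invoking $\sum_i \omega^i = 0$ this is still a witness, now of $n - qm$, and at least one coordinate is zero. (ii) If that zero sits at index $j \ne q - 1$, multiply the congruence $\sum_i b_i \omega^i \equiv 0 \pmod{p}$ by $\omega^{-(j+1)}$ and reindex with $k = (i - j - 1) \bmod q$, so the new coefficient at position $k$ is $b_{(k+j+1) \bmod q}$; this is a permutation of the original coordinates, hence still non-negative and of the same total, and the zero now sits at position $q-1$. (iii) For every coordinate with $a_i \ge p$, replace $a_i$ by $a_i \bmod p$; subtracting a multiple of $p \zeta_q^i$ preserves $\Norm{\,\cdot\,} \bmod p$ and lowers the sum by a multiple of $p$. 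After these reductions we have a witness $(a_0, \ldots, a_{q-2}, 0)$ with $0 \le a_i < p$ and $\Norm{\sum a_i \zeta_q^i} \equiv 0 \pmod{p}$, so $n = \sum_i a_i + mq + kp$ exhibits the desired decomposition inside the monoid generated by $T$.

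The main obstacle I anticipate is step (ii) of the reduction: one must notice that multiplying a mod-$p$ identity by a power of $\omega \in \mathbb{F}_p^\times$ and relabeling the indices yields a genuinely new non-negative combinatorial witness (not merely a restatement of the same algebraic identity), and that this is the mechanism that lets us arrange the distinguished zero at position $q-1$ without leaving the $a_i \ge 0$ regime. The remaining ingredients are standard algebraic-number-theoretic facts about the splitting of $p$ in $\mathbb{Z}[\zeta_q]$ and direct arithmetic.
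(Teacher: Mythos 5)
Your proposal is correct, and it takes a genuinely different route at the decisive step while also proving more than the paper does. The part you share with the paper is the combinatorial reduction of a witness: subtracting the minimal coefficient (spending copies of $q$), rotating by a power of the root of unity so the zero coefficient sits last, and reducing coefficients modulo $p$ (spending copies of $p$); this is precisely the content of the paper's Lemma~\ref{lem:qminusone}. Where you diverge is the bridge between the congruence $\sum_i a_i g^i \equiv 0 \pmod{p}$ in $\mathbb{F}_p$ and the cyclotomic norm: the paper builds an explicit reduction map $\rho:\mathbb{F}_p[\zeta_q]\to\mathbb{F}_p$ with $\rho(\zeta_q)=g$, introduces a ``$p$-norm'' $N'$, and compares $\Norm{\cdot}$ with $N'$, which yields only the one implication it needs, namely that a witness with $\sum_i b_i g^i\equiv 0$ forces $\Norm{\sum_i b_i\zeta_q^i}\equiv 0\pmod p$; you instead invoke the complete splitting $(p)=\mathfrak{p}_1\cdots\mathfrak{p}_{q-1}$ in $\ZZ[\zeta_q]$ to get an equivalence, $p\mid \Norm{\alpha}$ iff $\alpha\in\mathfrak{p}_j$ for some $j$ iff $\sum_i a_i\omega_j^i\equiv 0$ for some primitive $q$-th root $\omega_j\in\mathbb{F}_p$, all of which generate the same subgroup $G$. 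This equivalence buys you the inclusion the paper never verifies: that every element of the displayed set actually lies in $\chart{\HH}$ (the paper proves only $T\subseteq S$, i.e.\ that the minimal generators appear in the list, leaving the ``soundness'' direction implicit), so your argument establishes both containments and is the more complete proof of the stated theorem. It is also structurally cleaner: the paper's $\mathbb{F}_p[\zeta_q]$, handled as a $q$-dimensional space with basis $1,\zeta_q,\dots,\zeta_q^{q-1}$, and its hands-on multiplicativity check for $\rho$ are somewhat informal, whereas in your setup the maps $\zeta_q\mapsto\omega_j$ are the reductions modulo the degree-one primes above $p$, for which being ring homomorphisms is automatic; the trade-off is that your route leans on standard algebraic number theory (unique factorization of ideals in $\ZZ[\zeta_q]$), while the paper's stays closer to elementary linear algebra over $\mathbb{F}_p$.
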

\subsection{Acknowledgements}
The authors would especially like to thank Oron Propp for providing constant guidance and support throughout our research. We would also like to thank Dr. Matt Baker for proposing this project, as well as Dr. Henry Cohn and Dylan Pentland for their helpful insights. Finally, we thank the PROMYS program and the Clay Mathematics Institute for giving us this research opportunity. 

\section{Definitions, Terminology, and Basic Properties}\label{sec:defs}
We now define some necessary concepts.  
\begin{defn}(Hyperoperation)
Given two non-empty subsets $A$,$B \in S$, we define $$A\boxplus B = \bigcup_{a\in A, b\in B}(a\boxplus b).$$
This operation is closed in $S$ and is associative.
\end{defn}
We can now construct analogs for groups, rings, and fields. 

\begin{defn}(Canonical Hypergroup)
A hypergroup G has the hyperoperation $\boxplus$ and a zero element. The hyperoperation must satisfy the following constraints:
\begin{itemize}
\item (Additive Identity) $0\boxplus x = \{ x\}$ for all $x\in G$.
\item (Hyperinverse) Given $x\in G$, there exists unique $y \in G$ such that $0\in x\boxplus y$.
\item (Reversibility) $x\in y\boxplus z$ if and only if $z \in x \boxplus (-y)$
\end{itemize}
\end{defn}
Since we consider only canonical hypergroups, we will refer to them as a hypergroup for conciseness.  \par 

\begin{defn}(Hyperring)
A hyperring $R$ is a hypergroup with an additional operation $\odot$ and element $I$ that satisfy the following:
\begin{itemize}
\item The tuple ($R,\odot$,$I$) is a commutative monoid.
\item The tuple $(R,\boxplus,0)$ is a commutative hypergroup.
\item (Absorption) For all $x\in R$, $x\odot 0 = 0$.
\item (Distributive) $a\odot (x\boxplus y) = (a\odot x)\boxplus (a\odot y)$ for all $a,x,y,\in R$. 
\end{itemize}
\end{defn}

\begin{prop}(Canonical)
The distributive law implies the reversibility axiom.
\end{prop}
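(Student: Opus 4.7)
The plan is to deduce reversibility from distributivity together with the uniqueness of hyperinverses. The central intermediate identity is $(-I)\odot x = -x$ for every $x \in R$; once this is in hand, we can transport additive information through multiplicative sign changes and recover reversibility via the uniqueness of the hyperinverse.

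First I would derive $(-I)\odot x = -x$. By the hyperinverse axiom $0 \in I\boxplus(-I)$, so using absorption and distributivity,
\[ 0 \;=\; 0\odot x \;\in\; (I\boxplus(-I))\odot x \;=\; (I\odot x)\boxplus((-I)\odot x) \;=\; x\boxplus((-I)\odot x). \]
Uniqueness of the hyperinverse of $x$ forces $(-I)\odot x = -x$. Next, assume $x\in y\boxplus z$. Multiplying this containment by $-I$ and applying distributivity yields
\[ -x \;=\; (-I)\odot x \;\in\; (-I)\odot(y\boxplus z) \;=\; (-y)\boxplus(-z). \]
Combining this with $0\in x\boxplus(-x)$ and using associativity,
\[ 0 \;\in\; x\boxplus(-x) \;\subseteq\; x\boxplus\bigl((-y)\boxplus(-z)\bigr) \;=\; \bigl(x\boxplus(-y)\bigr)\boxplus(-z). \]
Hence there exists $t\in x\boxplus(-y)$ with $0\in t\boxplus(-z)$; since the hyperinverse of $-z$ is unique and equal to $z$, we conclude $t=z$, so $z\in x\boxplus(-y)$. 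The reverse implication $z\in x\boxplus(-y) \Rightarrow x\in y\boxplus z$ follows by the symmetric argument: multiply by $-I$, relabel, and rerun the forward direction.

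The main obstacle is seeing past the naive ``cancellation'' idea. Because hyperaddition is set-valued, one cannot simply subtract $y$ from both sides of $x\in y\boxplus z$, and hypergroups in general admit no cancellation law. The crucial move is instead to insert the hyperinverse relation $0\in x\boxplus(-x)$ into a strategically parenthesized associative expression and let uniqueness of the hyperinverse pick out $z$ as the required witness inside $x\boxplus(-y)$; the proof that $(-I)\odot x = -x$ is the bridge that makes this insertion possible.
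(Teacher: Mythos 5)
Your proof is correct and follows essentially the same route as the paper: multiply by $-1$ and use distributivity to turn the hypothesis into a statement about negatives, insert $0 \in x \boxplus (-x)$ into an associatively regrouped sum, and invoke uniqueness of hyperinverses to identify $z$ inside $x \boxplus (-y)$. The only difference is cosmetic — you explicitly establish the identity $(-I)\odot x = -x$, which the paper's proof uses implicitly when it rewrites $-x \boxplus y$ as $-(x \boxplus -y)$.
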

\begin{proof}

Without loss of generality, let $x\in y \boxplus z$.
Note that $0 \in -x \boxplus x \subset -x \boxplus (y \boxplus z) = (-x \boxplus y) \boxplus z = -(x \boxplus -y) \boxplus z$, where the last equality follows because $-1 \odot (x \boxplus y)= (-1 \odot x) \boxplus (-1 \odot y)$ from distributivity. Because additive inverses are unique, this implies that $z \in x \boxplus -y$. An analogous argument proves the reverse direction.
\end{proof}

Finally, we define a hyperfield, which is the focus of the paper.

\begin{defn}(Hyperfield)
A hyperfield is a hyperring $F$ with $0\neq 1$ and every non-zero element of $F$ has a multiplicative inverse. 
\end{defn}

We extend a particular property of fields (its characteristic) to hyperfields.

\begin{defn}(Characteristic)
Let $\HH$ be a hyperfield. The characteristic of $\HH$, which we will denote as $\chart{\HH}$, is the submonoid of $\NN$ consisting of all $n\in \NN$ such that $0\in nx$ for all $x\in F$, where $nx$ is defined to be $\underbrace{x\boxplus x\boxplus\cdots\boxplus x}_{n \text{ }x\text{'s}}$.
\end{defn}

\begin{defn}(Generating set)
We call the finite set $G = \{g_1,g_2, \cdots, g_k\}$ of distinct natural numbers the generating set of a submonoid $S \subseteq \NN$ if $S = \{g_1x_1+g_2x_2 + \cdots + g_kx_k\; |\; g_i \in G, x_i \in \NN \cup \{0\}\}$ and $|G|$ is minimal. For hyperfield (respectively, hyperring) $\HH$, we denote the generating set of $\chart{\HH}$ as $\gen{\HH}$.
\end{defn}
\begin{defn}
(Continuous) We call a submonoid continuous at $k$ if and only if every integer greater than or equal to $k$ is contained in the submonoid.
\end{defn}
We will mostly examine the structure of hyperfields generated in the following manner:
\begin{prop}\label{prop:hyperfield_construction} (Krasner)
For any arbitrary ring (resp. a field) $R$, let $G$ be a subgroup of the group $R^\times$ of units in $R$. Then the set of $R/G$ units for the action of $G$ on $R$ by multiplication produces a hyperring (resp. a hyperfield) structure.
\end{prop}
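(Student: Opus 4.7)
The plan is to write down the natural structure on the orbit set $R/G$ and verify each axiom in turn. I would define multiplication by $[a]\odot[b]=[ab]$, which is well-defined because $G \subseteq R^\times$: indeed $(ag_1)(bg_2) = ab(g_1g_2)$ and $g_1g_2 \in G$. I would define the hyperaddition by
\[
[a]\boxplus[b] = \{[ag_1 + bg_2] : g_1, g_2 \in G\},
\]
with zero $[0]$ (the singleton $\{0\}$ is its own $G$-orbit since $g\cdot 0 = 0$) and multiplicative identity $[1]$.

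The first block of axioms to verify are the hypergroup axioms. The additive identity axiom $[0]\boxplus[x] = \{[x]\}$ is immediate from $0\cdot g_1 + x\cdot g_2 = xg_2$ lying in the orbit $[x]$. For hyperinverses the natural guess is $[-x]$: choosing $g_1=g_2=1$ shows $[0] \in [x]\boxplus[-x]$. Uniqueness is where the ``subgroup of units'' hypothesis becomes essential---if $[0]\in[x]\boxplus[y]$, then $xg_1 + yg_2 = 0$ for some $g_1,g_2 \in G$, and since $g_2 \in R^\times$ I can solve $y = -xg_1g_2^{-1}$, forcing $[y]=[-x]$.

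Next I would address the remaining hyperring axioms. Commutativity of both operations is inherited from $R$. For associativity of $\boxplus$, after unwinding the extended set-operation defined in Definition~1, both $([a]\boxplus[b])\boxplus[c]$ and $[a]\boxplus([b]\boxplus[c])$ should collapse to $\{[ag_1 + bg_2 + cg_3] : g_i \in G\}$; the key observation is that any product $g_ig_j$ is again in $G$, so pre-multiplying one summand by an inner scalar can be absorbed into a fresh choice of the outer scalars. Absorption $[x]\odot[0]=[0]$ is immediate, and distributivity follows from expanding both sides of $[a]\odot([x]\boxplus[y]) = ([a]\odot[x])\boxplus([a]\odot[y])$ to $\{[axg_1 + ayg_2] : g_i \in G\}$. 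Reversibility is then free by the already-established Canonical proposition. Finally, in the field case every nonzero orbit has the form $[a]$ with $a\in R^\times$, and $[a^{-1}]$ is a multiplicative inverse for $[a]$.

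The main subtlety is the hyperinverse axiom: existence is painless, but uniqueness is precisely where one uses that elements of $G$ are units, not merely a multiplicatively closed set. Associativity of $\boxplus$ is the step requiring the most careful unwinding of set-valued operations, though once written out it is routine. Everything else is bookkeeping on orbits.
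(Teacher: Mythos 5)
Your verification is correct, but there is nothing in the paper to compare it against: the paper does not prove this proposition at all, it simply cites Krasner's original article, so your write-up supplies the argument the paper leaves implicit. What you give is the standard direct check, and the steps you flag as delicate are the right ones. Two small points are worth making explicit if you write this out in full. First, your definition $[a]\boxplus[b]=\{[ag_1+bg_2]:g_1,g_2\in G\}$ should be reconciled with the paper's later definition ($[c]\in[a]\boxplus[b]$ iff the orbit $[c]$ is a subset of $\{x+y : x\in[a],\,y\in[b]\}$); they agree precisely because the sumset $aG+bG$ is stable under multiplication by $G$ and hence is a union of orbits, which is also the observation that makes your associativity computation collapse both sides to $\{[ag_1+bg_2+cg_3]\}$. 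Second, invoking the paper's ``Canonical'' proposition for reversibility is legitimate (you have independently established associativity, the identity, unique hyperinverses, and distributivity, which is all its proof uses), but it is just as quick to check reversibility directly: $x=yg_1+zg_2$ with $g_i\in G$ gives $z=xg_2^{-1}-yg_1g_2^{-1}$, so $[z]\in[x]\boxplus[-y]$, again using only that $G$ consists of units. With those clarifications your argument is complete for both the hyperring and the hyperfield cases.
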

\begin{proof}
Proven by Krasner in \cite{krasner}.
\end{proof}

\begin{rem}
Note that when refering to $G$, we will use angle brackets to denote the set, namely $\langle$ and $\rangle$.
\end{rem}

\begin{defn} (Type $\pn$-hyperfield)
We let a hyperfield of type $(p,n)$ be of the form $(\ZZ/p\ZZ)/G$ for positive prime $p$ and subgroup $G \subseteq (\ZZ/p\ZZ)^{\times}$. A hyperfield is the $(p,n)-$ hyperfield if the subgroup $G$ has size $n$.
\end{defn} 
\begin{rem}
This always yields a valid hyperfield by Proposition \ref{prop:hyperfield_construction}.
\end{rem}
\begin{defn}
For hyperrings $H = R/G$ of type $(p,n)$, the operation $\odot$ satisfies $[a] \odot [b] = [ab]$, where $a,b \in R$ and $[a]$ is the set generated by the multiplicative action of $G$ on $a$, while the operation $\boxplus$ is such that $[a] \boxplus [b]$ is the set of all $[c] \in H$ that are subsets of $\{x+y | x \in [a], y \in [b]\}$.
\end{defn}
We now prove some useful propositions. 
\begin{prop}\label{prop:checkfirst}
To check whether $n \in char(F/G)$, it suffices to check only that $0 \in n[1]$
\end{prop}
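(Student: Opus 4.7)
The plan is to leverage the multiplicative structure of $F/G$: since $F/G$ is a hyperring, we have distributivity of $\odot$ over $\boxplus$ and absorption of $0$. These two facts alone allow us to transport the membership condition from $[1]$ to any other element.

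First, I would note that every element of $F/G$ is either $0$ or of the form $[a]$ for some $a \in F^{\times}$. The case $x = 0$ is immediate since $0 \boxplus 0 \boxplus \cdots \boxplus 0 = \{0\}$, so $0 \in n \cdot 0$ holds trivially for every $n$. Thus the only substantive case is $x = [a]$ with $a \neq 0$.

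Next, assuming $0 \in n[1] = \underbrace{[1] \boxplus [1] \boxplus \cdots \boxplus [1]}_{n}$, I would apply $[a] \odot (-)$ to both sides. On the left, absorption gives $[a] \odot 0 = 0$. On the right, iterating the distributive law $[a] \odot (u \boxplus v) = ([a]\odot u) \boxplus ([a]\odot v)$ a total of $n-1$ times yields
\[
[a] \odot \bigl([1] \boxplus [1] \boxplus \cdots \boxplus [1]\bigr) = ([a]\odot[1]) \boxplus ([a]\odot[1]) \boxplus \cdots \boxplus ([a]\odot[1]) = n[a],
\]
using that $[1]$ is the multiplicative identity. Combining these, $0 \in n[a]$, as required.

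There is no real obstacle here; the only minor care needed is to interpret $[a]\odot S$ for a set $S$ as the elementwise image (equivalently, as the hyperoperation extended to subsets in the standard way) so that the implication $0 \in S \implies [a]\odot 0 \in [a]\odot S$ is justified. Once this is in hand, the proof is a one-line application of distributivity plus absorption.
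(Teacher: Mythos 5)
Your proof is correct and follows essentially the same route as the paper's: apply $[a]\odot(-)$ to the relation $0\in n[1]$, use distributivity to rewrite $[a]\odot(n[1])$ as $n[a]$, and use absorption to conclude $0\in n[a]$. Your extra remarks (handling $x=0$ separately and interpreting $[a]\odot S$ elementwise) are harmless refinements of the same argument.
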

\begin{proof}
We see first that $0 \in n[1]$ is a necessary condition for $n \in \chart{F/G}$, as $1 \in F$. Now we show that it is sufficient: Let $a \in F$. Then, $a \odot (n[1])=n[a]$ by distributivity. Since $0$ is the absorption element, $0 \in n[a]$, we're done. Note that this proof extends to hyperrings as well.
\end{proof}
\begin{theorem}\label{thm:partition}
If $G= \langle g_1,g_2,\dots,g_m\rangle$, then $\sum_{i=1}^ma_ig_i=0$ in the ring implies that $\sum_{i=1}^ma_i \in \chart{F/G}$, where the $a_i \in \ZZ_{\ge 0}$. Additionally, $\sum_{i=1}^ma_i \in \chart{F/G}$ only if there exists an $m$-tuple $(a_1,a_2,\dots,a_m)$ such that $\sum_{i=1}^ma_ig_i=0$.
\end{theorem}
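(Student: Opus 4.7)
The plan is to prove both implications by unfolding the hyperaddition in terms of ordinary addition in the ring and then applying Proposition~\ref{prop:checkfirst}. By that proposition, $n \in \chart{F/G}$ is equivalent to $0 \in n[1]$ in the hyperfield sense. Since the orbit of $1$ under the multiplicative $G$-action is $[1] = G = \{g_1, g_2, \ldots, g_m\}$, the definition of $\boxplus$ gives
\[
n[1] = \bigl\{[c] \in F/G : [c] \subseteq \{h_1 + h_2 + \cdots + h_n : h_j \in G\}\bigr\}.
\]
Because $[0] = \{0\}$ is a singleton, the subset condition collapses to mere membership: $[0] \in n[1]$ precisely when $0$ can be written as a length-$n$ sum of elements of $G$ (with repetition) in $\ZZ/p\ZZ$.

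Once the problem is reduced to this combinatorial form, both directions follow by a grouping argument. For the ``if'' direction, given nonnegative integers $a_1, \ldots, a_m$ with $\sum_{i=1}^m a_i g_i = 0$, I would form the sum consisting of $a_1$ copies of $g_1$, then $a_2$ copies of $g_2$, and so on; this is a length-$n = \sum a_i$ sum of elements of $G$ equal to $0$, so $n \in \chart{F/G}$. For the ``only if'' direction, given $n \in \chart{F/G}$, I would pick any expression $0 = h_1 + h_2 + \cdots + h_n$ with each $h_j \in G$ and then define $a_i$ to be the number of indices $j$ for which $h_j = g_i$; by construction $\sum a_i = n$ and $\sum a_i g_i = 0$.

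The main (and only) subtlety is correctly interpreting the hyperaddition: $[c] \in n[1]$ is a subset condition on $[c]$, not a membership condition, so one has to verify that for the particular class $[0]$ this degenerates to a single equation in the ring. Beyond that, the proof is essentially bookkeeping, exploiting the observation that any ordered length-$n$ sum of elements of $G$ is determined up to reordering by its multiset of summands, equivalently by nonnegative multiplicities $(a_1, \ldots, a_m)$ summing to $n$. I would therefore expect the writeup to be short, with the formal content concentrated in unwinding $n[1]$ via Proposition~\ref{prop:checkfirst}.
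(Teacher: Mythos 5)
Your proposal is correct and follows essentially the same route as the paper: reduce to the class $[1]$ via Proposition~\ref{prop:checkfirst}, observe that $0 \in n[1]$ means $0$ is a length-$n$ sum of elements of $G$, and pass between such sums and the multiplicity tuples $(a_1,\dots,a_m)$. Your explicit remark that $[0]=\{0\}$ makes the subset condition collapse to membership is a small point the paper glosses over, but it does not change the argument.
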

\begin{proof}
It suffices to consider the case where $G$ acts upon $1$ by Proposition \ref{prop:checkfirst}.\par
Since we are taking the setwise sum, we see that $0\in n[1]$ if and only if we can add $n$ of the elements in the group together to obtain zero. So, we add some combination of units together, with $a_i$ instances of $g_i$ in the sum, for a total of $n=\sum_{i=1}^ma_i$ summands. Since we assumed that adding the combination of $g_i$ yields zero, $n = \sum_{i=1}^ma_i\in \chart{F/G}.$ 
\end{proof}

\section{Generating sets of $(p,n)$-hyperfields}\label{sec:gensetspn}

\subsection{Computations}
Programs were written in \texttt{Magma}\cite{magma}, \texttt{C} and \texttt{Python} to compute the generating sets of $\pn$-hyperfields. Table \ref{table:geq1} contain the generating set for the $\pn$-hyperfield for some primes $p$ and when $n=1,2,3,4$. Tables for all $(p,n)$-hyperfields for $p<200$ is included in the Appendix, though data for all primes less than 1500 was calculated. \par
\begin{table}[ht]
\caption{$|G|=1,2,3,4$}
    \begin{tabular}{| c c |c c| c c| c c |}
    \hline
    \multicolumn{2}{|c|}{$|G|=1$} &\multicolumn{2}{c}{$|G|=2$} &\multicolumn{2}{|c|}{$|G|=3$} &\multicolumn{2}{|c|}{$|G|=4$}\\ 
    $p$ & Generating Set & $p$ & Generating Set & $p$ & Generating Set & $p$ & Generating Set   \\
    \hline
2	&	$\{2\}$	&	3	&	$\{2, 3\}$	&	7	&	$\{3, 4, 5\}$&5	&	$\{2, 3\}$	\\
3	&	$\{3\}$	&	5	&	$\{2, 5\}$	&	13	&	$\{3, 5, 7\}$&13	&	$\{2, 5\}$	\\
5	&	$\{5\}$	&	7	&	$\{2, 7\}$	&	19	&	$\{3, 7, 8\}$&17	&	$\{2, 5\}$	\\
7	&	$\{7\}$	&	11	&	$\{2, 11\}$	&	31	&	$\{3, 7, 11\}$&29	&	$\{2, 7\}$	\\
11	&	$\{11\}$	&	13	&	$\{2, 13\}$	&	37	&	$\{3, 10, 11\}$&37	&	$\{2, 7\}$	\\
13	&	$\{13\}$	&	17	&	$\{2, 17\}$	&	43	&	$\{3, 8, 13\}$&41	&	$\{2, 9\}$	\\
17	&	$\{17\}$	&	19	&	$\{2, 19\}$	&	61	&	$\{3, 13, 14\}$&53	&	$\{2, 9\}$	\\
19	&	$\{19\}$	&	23	&	$\{2, 23\}$	&	67	&	$\{3, 11, 16\}$&61	&	$\{2, 11\}$	\\
23	&	$\{23\}$	&	29	&	$\{2, 29\}$	&	73	&	$\{3, 10, 17\}$&73	&	$\{2, 11\}$	\\
29	&	$\{29\}$	&	31	&	$\{2, 31\}$	&	79	&	$\{3, 13, 17\}$&89	&	$\{2, 13\}$	\\
31	&	$\{31\}$	&	37	&	$\{2, 37\}$	&	97	&	$\{3, 14, 19\}$&97	&	$\{2, 13\}$	\\
37	&	$\{37\}$	&	41	&	$\{2, 41\}$	&	103	&	$\{3, 13, 20\}$&	101	&	$\{2, 11\}$	\\
41	&	$\{41\}$	&	43	&	$\{2, 43\}$	&	109	&	$\{3, 17, 19\}$&	109	&	$\{2, 13\}$	\\
43	&	$\{43\}$	&	47	&	$\{2, 47\}$	&	127	&	$\{3, 19, 20\}$&    113	&	$\{2, 15\}$	\\
47	&	$\{47\}$	&	53	&	$\{2, 53\}$	&	139	&	$\{3, 16, 23\}$&	137	&	$\{2, 15\}$	\\
53	&	$\{53\}$	&	59	&	$\{2, 59\}$	&	151	&	$\{3, 19, 23\}$&	149	&	$\{2, 17\}$	\\
    \hline
    \end{tabular}
\label{table:geq1}
\end{table}
\subsection{Some useful lemmas}
\begin{lemma}\label{lem:existence_generator}
For any subgroup of units of $\ZZ/p\ZZ$, there is a generator of the subgroup. In particular, if the subgroup is size $|G|$, then a generator of the subgroup is $g^{\frac{p-1}{|G|}}$, where $g$ is a generator of $(\ZZ/p\ZZ)^{\times}$.
\end{lemma}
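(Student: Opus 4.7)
The plan is to reduce the statement to two classical facts about cyclic groups: first, that $(\ZZ/p\ZZ)^\times$ is itself cyclic of order $p-1$; and second, that every subgroup of a finite cyclic group is cyclic, with a unique subgroup of each order dividing the group order. With these in hand, the explicit generator falls out of a routine order calculation.

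First I would invoke the standard fact that $(\ZZ/p\ZZ)^\times$ is cyclic of order $p-1$ (proved classically by observing that the polynomial $x^d - 1$ has at most $d$ roots in the field $\ZZ/p\ZZ$, so the number of elements of each order divides $\varphi$ nicely). Fix a primitive root $g$ so that $(\ZZ/p\ZZ)^\times = \langle g \rangle$. By Lagrange's theorem applied to the subgroup $G$ of size $|G|$, the integer $|G|$ divides $p-1$, so the exponent $(p-1)/|G|$ is a well-defined positive integer and the element $h := g^{(p-1)/|G|}$ makes sense.

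Next I would compute the order of $h$ directly. Since $g$ has order $p-1$, the order of $g^k$ equals $(p-1)/\gcd(p-1,k)$; plugging in $k = (p-1)/|G|$ gives $\gcd(p-1,(p-1)/|G|) = (p-1)/|G|$, so $h$ has order exactly $|G|$. Thus $\langle h \rangle$ is a subgroup of $(\ZZ/p\ZZ)^\times$ of size $|G|$.

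Finally, I would conclude via uniqueness. In any cyclic group of order $n$, for each divisor $d \mid n$ there is at most one subgroup of order $d$: any such subgroup consists of elements satisfying $x^d = 1$, and there are at most $d$ such elements in the underlying field. Since both $G$ and $\langle h \rangle$ are subgroups of $(\ZZ/p\ZZ)^\times$ of size $|G|$, they must coincide, so $h$ generates $G$. The only potential subtlety is the uniqueness step — one has to be careful to appeal to the fact that $\ZZ/p\ZZ$ is a field (so $x^d - 1$ has at most $d$ roots) rather than just to abstract group theory — but this is routine once stated correctly.
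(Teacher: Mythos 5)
Your proof is correct, but it takes a genuinely different route from the paper's. The paper fixes a primitive root $g$ and works inside $G$ itself: writing elements of $G$ as $g^{da}$ and $g^{db}$ with $(a,b)=1$, it uses closure of $G$ under multiplication and inverses together with Bezout's lemma to conclude $g^d \in G$, and iterates this to show $G$ is generated by a single power of $g$ (in effect re-proving that every subgroup of a cyclic group is cyclic); the explicit exponent $(p-1)/|G|$ is then read off as the ``second half follows immediately'' step. You instead argue top-down: you take the candidate $h = g^{(p-1)/|G|}$ (well-defined since $|G|$ divides $p-1$ by Lagrange), compute $\mathrm{ord}(h) = |G|$ from the order-of-a-power formula, and then identify $G = \langle h \rangle$ by uniqueness of the subgroup of order $|G|$, obtained by counting roots of $x^{|G|} - 1$ in the field $\ZZ/p\ZZ$. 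Both arguments presuppose cyclicity of $(\ZZ/p\ZZ)^{\times}$ (the paper implicitly, by positing the generator $g$; you explicitly). Your version nails the explicit generator more directly, which is arguably the part the paper glosses over, at the cost of invoking the field structure; note, though, that your closing caveat is slightly overcautious: once cyclicity of the ambient group is granted, the purely group-theoretic fact that a cyclic group has a unique subgroup of each order dividing its order would also suffice, so the appeal to root-counting in a field is one valid option rather than a necessity. The paper's Bezout argument, by contrast, needs only the existence of a primitive root and elementary number theory on the exponents.
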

\begin{proof}
Consider two elements in $G$, namely $g^{da}$ and $g^{db}$ where $g$ is a generator in $\ZZ/p\ZZ$ and $(a,b)=1$. Since there always exists a unique multiplicative inverse in the subgroup, by Bezout's famous lemma, $g^d$ is also in the subgroup, which generates both $g^{da}$ and $g^{db}$. We can continue this process inductively, ultimately showing that there is a generator for all elements in $G$. The second half of the statement follows immediately.
\end{proof}
\begin{lemma}
For any $\ZZ/p\ZZ$ with a subgroup $G=\langle g_0,g_1,\dots,g_m\rangle $ of $(\ZZ/p\ZZ)^{\times}$, if $kp \ge \Sigma b_ig_i$ for non-negative integers $b_i$ and a positive integer $k$, then $s \in S$ iff $s=kp-\Sigma{b_i(g_i-1)}$.
\end{lemma}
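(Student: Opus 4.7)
The plan is to read the lemma as the existential biconditional that $s \in S$ if and only if there exist non-negative integers $b_0, \ldots, b_m$ and a positive integer $k$ with $kp \ge \sum b_i g_i$ such that $s = kp - \sum b_i(g_i - 1)$, and then to derive each direction from Theorem \ref{thm:partition}. The key structural observation is that $1 \in G$, so after relabeling one of the $g_i$ (say $g_0$) equals $1$; consequently the term $b_0(g_0 - 1)$ vanishes, and $g_0$ can function as a slack generator that converts the inequality $kp \ge \sum b_i g_i$ into the exact equation $\sum a_i g_i = kp$ demanded by Theorem \ref{thm:partition}.

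For the forward direction, I would begin with $s \in S$ and invoke Theorem \ref{thm:partition} to produce non-negative integers $a_0, \ldots, a_m$ with $\sum a_i g_i = kp$ for some $k \ge 0$ and $\sum a_i = s$. Setting $b_i := a_i$, the required bound $kp \ge \sum b_i g_i$ holds trivially with equality, and then
$$ kp - \sum b_i(g_i - 1) = kp - \sum b_i g_i + \sum b_i = 0 + s = s, $$
as desired.

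For the reverse direction, given such $b_i$ and $k$, I would introduce the slack $r := kp - \sum b_i g_i \ge 0$ and define $a_0 := b_0 + r$ with $a_i := b_i$ for $i \ge 1$. Because $g_0 = 1$, a direct check yields $\sum a_i g_i = \sum b_i g_i + r = kp \equiv 0 \pmod p$ and $\sum a_i = \sum b_i + r = s$, so Theorem \ref{thm:partition} places $s$ in $S$.

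The main obstacle here is interpretive rather than technical: one must realize that the "iff" is implicitly existential over the choice of $(b_i, k)$, and recognize that the identity element $g_0 = 1 \in G$ is exactly the slack needed to reconcile the bound $kp \ge \sum b_i g_i$ with the exact partition condition of Theorem \ref{thm:partition}. Once that interpretation is in place, the argument reduces to elementary bookkeeping.
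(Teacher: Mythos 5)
Your proposal is correct, and it rests on the same underlying idea as the paper's proof --- namely that $1\in G$ serves as the slack that converts the inequality $kp\ge\sum b_ig_i$ into an exact relation $\sum a_ig_i\equiv 0\pmod p$ whose number of summands is $kp-\sum b_i(g_i-1)$ --- but the presentation is genuinely different and, in one respect, tighter. The paper argues algorithmically: it starts from the trivial representation $(a_1,\dots,a_m)=(kp,0,\dots,0)$ (using the generator $1$) and repeatedly trades $g_i$ copies of $1$ for a single copy of $g_i$, each trade lowering the summand count by $g_i-1$ and remaining legal exactly while $kp\ge\sum b_ig_i$; it then simply asserts that ``this algorithm gives all possible solutions,'' which is the completeness step. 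You instead read the statement (correctly) as existential in $(b_i,k)$ and verify both directions directly against Theorem \ref{thm:partition}: your reverse direction, with the slack $r=kp-\sum b_ig_i$ absorbed into the coefficient of $g_0=1$, reconstructs the paper's algorithmic construction in one step, while your forward direction (take $b_i=a_i$, so the bound holds with equality and $kp-\sum b_i(g_i-1)=\sum a_i=s$) supplies an explicit proof of precisely the completeness claim the paper leaves unjustified. The only points worth flagging are bookkeeping ones: you should fix integer representatives $1\le g_i\le p-1$ so that $\sum a_ig_i$ is literally a nonnegative multiple $kp$, and note that $k\ge 1$ whenever $s>0$ since each $g_i\ge 1$, which squares your $k\ge 0$ with the lemma's ``positive integer $k$.''
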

\begin{proof}
We first note that $(a_1,a_2,\dots,a_m)=(kp,0,\dots,0)$ for any arbitrary natural number $k$ causes $\Sigma_{i=1}^m a_ig_i=kp=0$, so $kp+0+\dots+0=kp \in S$. \par We can replace any $(a_1,\dots,a_i,\dots,a_m)$ with $(a_1-g_i,\dots,a_i+1,\dots,a_m)$, which still yields a sum of $\Sigma_{i=1}^m a_ig_i=kp=0$, but instead causes $kp-g_{x_1}+1-g_{x_2}+1-\dots-g_{x_n}+1 \in S$ for some sequence of $x_i$, where $1 \le x_i \le m$, as long as $a_1 \ge 0$. This algorithm gives all possible solutions to $\Sigma_{i=1}^m a_ig_i=kp$, so the reverse direction holds. We can simplify this statement to the one stated in the lemma, where $b_j$ represents the number of times some $x_i=j$.
\end{proof}

\begin{lemma}\label{lem:continuity}
For any $\ZZ/p\ZZ$ and a nontrivial subgroup of units $G$, $\chart{(\ZZ/p\ZZ)/G}$ is continuous at $p-1$.
\end{lemma}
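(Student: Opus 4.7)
The plan is to apply \cref{prop:checkfirst} to reduce the claim to showing $0 \in n[1]$ for every integer $n \geq p-1$; unwinding the definition of hyperaddition, this is equivalent to writing $0 \pmod p$ as a sum of $n$ elements of $G$ (with repetition allowed).

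Since $G$ is nontrivial, I would fix some $g \in G$ with $g \neq 1$ and then, for each target $n$, look for a representation using only the two elements $1, g \in G$: nonnegative integers $a,b$ with $a+b=n$ and $a \cdot 1 + b \cdot g \equiv 0 \pmod p$. Substituting $a = n-b$, this rearranges to the linear congruence $n + b(g-1) \equiv 0 \pmod p$.

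The crucial point is that because $g \neq 1$, the element $g-1$ is a unit in $\ZZ/p\ZZ$, so the congruence has a unique solution $b_0 \in \{0, 1, \ldots, p-1\}$, namely $b_0 \equiv -n(g-1)^{-1} \pmod p$. Taking $a = n - b_0$, the nonnegativity constraint $a \geq 0$ becomes $n \geq b_0$, and since $b_0 \leq p-1$, it holds automatically for every $n \geq p-1$. Applying \cref{thm:partition} with the two chosen elements $1,g \in G$ then yields $n \in \chart{(\ZZ/p\ZZ)/G}$, which is precisely the desired continuity at $p-1$.

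The argument is essentially a one-step reduction to the solvability of a linear congruence modulo $p$, so I do not anticipate any real obstacle; the only structural input beyond the definitions is that a nontrivial $G$ contains some $g \neq 1$, which is exactly what makes $g-1$ invertible. The threshold $p-1$ emerges naturally as the smallest $n$ for which the unique residue $b_0 \in \{0, \ldots, p-1\}$ is guaranteed to satisfy $b_0 \leq n$.
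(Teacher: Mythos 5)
Your proposal is correct and is essentially the paper's argument: both hinge on picking $g \in G$ with $g \neq 1$ so that $g-1$ is a unit mod $p$, and on the observation that the resulting residue $b_0 \le p-1$ forces the threshold $p-1$. The paper phrases it through its parametrization $s = kp - \sum b_i(g_i-1)$ from the preceding lemma, while you solve the linear congruence directly and invoke Theorem \ref{thm:partition}, but the substance is the same.
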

\begin{proof}
Because of the nontriviality of the subgroup, there exists an element $g_i\in G$ such that $g_i-1$ is a unit modulo $p$. Then, as $b_i$ ranges from 0 to $p-1$, $s=kp-\Sigma b_i(g_i-1)$ hits every residue modulo $p$, and since $kp \ge \Sigma b_ig_i$, this implies that $s \ge p-1 \ge b_i$, so any value greater than or equal to $p-1$ is valid.
\end{proof}
\begin{lemma}\label{lem:twosolsfor3case}
The equation $a^2-ab+b^2$ has two distinct solutions $(a,b)\in\NN\cup \{0\}$.
\end{lemma}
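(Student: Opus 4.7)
The plan is to identify the form $a^2 - ab + b^2$ with the norm on the Eisenstein integers $\ZZ[\omega]$, where $\omega = e^{2\pi i /3}$ satisfies $\omega^2 + \omega + 1 = 0$. Under this identification, $N(a + b\omega) = (a+b\omega)(a+b\omega^2) = a^2 - ab + b^2$, so ordered-pair solutions to $a^2 - ab + b^2 = p$ correspond bijectively to Eisenstein integers of norm $p$. The hypothesis $|G| = 3$ forces $3 \mid p-1$, which is exactly the condition for $p$ to split in the Euclidean domain $\ZZ[\omega]$ as $p = \pi\bar\pi$; in particular, at least one solution exists.

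Next, since $\ZZ[\omega]$ is a UFD, its unit group is $\{\pm 1, \pm \omega, \pm \omega^2\}$, and $\pi, \bar\pi$ are non-associate for a split prime, the Eisenstein integers of norm $p$ are exactly the twelve elements $\{u\pi, u\bar\pi : u \text{ a unit}\}$. I would expand each in the basis $\{1, \omega\}$ using $\omega^2 = -1 - \omega$, producing twelve ordered pairs in $\ZZ^2$. Assuming (WLOG) $\pi = a + b\omega$ with $a > b > 0$, a direct enumeration shows that exactly four of the twelve pairs lie in $(\NN \cup \{0\})^2$: namely $(a,b), (b,a), (a, a-b)$, and $(a-b, a)$. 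These form exactly two unordered pairs, $\{a, b\}$ and $\{a, a-b\}$.

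Finally, I would verify that the two unordered pairs are actually distinct and yield distinct sums $a + b$ versus $2a - b$. Equality of the sums forces $a = 2b$, which gives $p = 3b^2$, impossible for a prime $p \neq 3$. The degenerate cases $b = 0$ or $a = b$ would give $p = a^2$, again impossible for prime $p$. The main bookkeeping step is the enumeration in the second paragraph, which is routine but tedious; an alternative elementary route would be to avoid the Eisenstein-integer framework entirely and directly verify that $(a,b) \mapsto (b,a)$ and $(a,b) \mapsto (a, a-b)$ are symmetries of the form, then argue combinatorially that the 12-element symmetry orbit of any solution intersects the non-negative cone in exactly four points — but the algebraic framework is conceptually cleaner and makes the bound of 12 total integer solutions transparent via unique factorization.
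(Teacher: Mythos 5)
Your proposal is correct and takes essentially the same route as the paper: identify $a^2-ab+b^2$ with the Eisenstein norm, invoke unique factorization in $\ZZ[\omega]$ so that every solution is an associate of $\pi$ or $\bar{\pi}$, and read off that only $(a,b)$ and $(a,a-b)$ survive up to order. You are in fact somewhat more careful than the paper's own argument, which omits the existence step (splitting of $p\equiv 1 \pmod{3}$), the explicit enumeration of the twelve associates, and the check that the two sums $a+b$ and $2a-b$ are distinct.
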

\begin{proof}
To do this, we first prove that $a^2-ab+b^2=p$ has two distinct solutions up to order in the natural numbers. Consider the Eisenstein integer $a+b\omega \in \ZZ[\omega]$ and its norm, namely $\Norm{a+b\omega}=a^2-ab+b^2=(a+b\omega)(a+b\omega^2)$. Because $\Norm{a+b\omega}$ is a rational prime, $a+b\omega$ and $a+b\omega^2$ must be primes in $\ZZ[\omega]$. \par Since $\ZZ[\omega]$ has unique prime factorization up to associates, we see that $a+b\omega$ and $a+b\omega^2$ and their associates are the only Eisenstein integers whose norms satisfy $a^2-ab+b^2=p$. If we assume without loss of generality $a>b$, then the only two solutions to $a^2-ab+b^2=p$ are $(a,b)$ and $(a,a-b)$.
\end{proof}
\subsection{Explicit forms of generating sets}
We have the following four theorems:
\begin{theorem}
Let $\HH$ be an $\pn-$hyperfield with $|G|=1$ and $p\ge 2$. Then $\gen{\HH}=\{p\}$.    
\end{theorem}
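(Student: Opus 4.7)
The plan is to apply Theorem \ref{thm:partition} directly to the trivial subgroup $G=\{1\}$. Since $|G|=1$, by \cref{lem:existence_generator} the subgroup is generated by a single element, which must be $g_1=1$, and so the characteristic condition from \cref{thm:partition} reduces to: $n\in\chart{\HH}$ if and only if there exists $a_1\in\ZZ_{\ge 0}$ with $a_1\cdot 1\equiv 0\pmod p$ and $a_1=n$.

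From this, I would observe that $a_1\equiv 0\pmod p$ is equivalent to $a_1$ being a non-negative multiple of $p$. Hence the characteristic submonoid is exactly $p\NN=\{0,p,2p,3p,\dots\}$, whose minimal generating set is clearly the singleton $\{p\}$. For completeness I would verify minimality: any generating set for $p\NN$ must contain $p$ itself, since $p$ cannot be written as a non-negative integer combination of strictly larger elements of $p\NN$.

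There is no real obstacle here; the theorem is essentially an immediate consequence of \cref{thm:partition} once one unpacks what it says for the trivial subgroup. The only subtlety worth being explicit about is that when $G=\{1\}$, the hyperoperation $\boxplus$ coincides with ordinary addition in $\ZZ/p\ZZ$ (each equivalence class is a singleton), so $n[1]=\{[n]\}$, and $0\in n[1]$ reduces to the familiar condition $p\mid n$. This sanity check confirms that the general machinery of \cref{thm:partition} specializes correctly to recover the classical notion of characteristic for the field $\ZZ/p\ZZ$.
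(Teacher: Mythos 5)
Your proof is correct and follows essentially the same route as the paper: both invoke Theorem \ref{thm:partition} with $G=\{1\}$ to reduce membership in $\chart{\HH}$ to the condition $a\cdot 1\equiv 0\pmod p$, yielding the multiples of $p$ and hence the generating set $\{p\}$. Your added remarks on minimality and on $\boxplus$ reducing to ordinary addition are sound but not needed beyond what the paper does.
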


\begin{proof}
Using Theorem \ref{thm:partition}, we want all $n \in \NN$ such that $n\times1=0$ in $\ZZ/p\ZZ$, since $G=\{1\}$. This implies that $a=0$, so $a$ must be any positive multiple of $p$, leading to the generating set $\{p\}$.
\end{proof}

\begin{theorem}
Let $\HH$ be an $\pn-$hyperfield with $|G|=2$ and $p\ge 3$. Then $\gen{\HH}=\{2, p\}$.    
\end{theorem}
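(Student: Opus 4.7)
The plan is to combine Lemma \ref{lem:existence_generator} with Theorem \ref{thm:partition} to translate the statement into an elementary numerical problem about representations of natural numbers.

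First, I would pin down the subgroup. By Lemma \ref{lem:existence_generator}, when $|G|=2$ the subgroup is generated by $g^{(p-1)/2}$ for $g$ a primitive root mod $p$, which equals $-1$. Hence $G = \{1, p-1\}$. Applying Theorem \ref{thm:partition} with this explicit $G$, a positive integer $n$ lies in $\chart{\HH}$ if and only if we can write $n = a + b$ with $a, b \in \ZZ_{\geq 0}$ satisfying $a \cdot 1 + b \cdot (p-1) \equiv 0 \pmod p$, i.e. $a \equiv b \pmod p$.

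Next, I would show that the set of such $n$ coincides with the submonoid of $\NN$ generated by $\{2, p\}$. For the forward inclusion, given $a \equiv b \pmod p$ with $a \geq b$, write $a = b + kp$ for some $k \geq 0$; then $a + b = 2b + kp \in \langle 2, p\rangle$. For the reverse inclusion, any $n = 2s + tp$ with $s, t \geq 0$ can be realized by taking $b = s$ and $a = s + tp$, which satisfy $a \equiv b \pmod p$ and $a + b = n$. This shows $\chart{\HH} = \langle 2, p \rangle$ as submonoids of $\NN$.

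Finally, I would verify minimality of the generating set. Because $p \geq 3$ is odd, $p \notin \langle 2 \rangle$ and $2 \notin \langle p \rangle$, so neither generator is redundant and no smaller set produces this monoid. Hence $\gen{\HH} = \{2, p\}$. The only subtlety to be careful about is the reverse-inclusion step, which requires checking that every combination $2s + tp$ admits a valid representation as $a+b$ with $a \equiv b \pmod p$; this is immediate from the construction above, so there is no serious obstacle and the proof is essentially a direct application of Theorem \ref{thm:partition}.
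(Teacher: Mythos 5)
Your proposal is correct and follows essentially the same route as the paper: reduce via Theorem \ref{thm:partition} to the condition $a\equiv b\pmod{p}$ with $G=\{1,-1\}$, then write $a=b+kp$ to get $a+b=2b+kp$ and conclude the characteristic is generated by $\{2,p\}$. Your explicit check of the reverse inclusion and of minimality (using that $p$ is odd) is slightly more thorough than the paper's terse argument, but it is the same proof.
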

\begin{proof}
Since $G=\langle-1,1\rangle,$ it suffices by Theorem \ref{thm:partition} to characterize $a+b$, where $a-b=0$. We can then let $a=b+kp$, so $a+b=2b+pk$. Since $b$ and $k$ can be any natural number, the generating set of the characteristic is $\{2,p\}.$
\end{proof}

\begin{theorem}\label{thm:cardG3}
Let $\HH$ be an $\pn-$hyperfield with $|G|=3$ and $p\ge 7$. Then there always exists positive integral solutions $(a,b)$ to $a^2-ab+b^2=p$. Let $m,n$ be the two possible values of $a+b$. Then the generating set is in the form $\{3,m,n \}$.
\end{theorem}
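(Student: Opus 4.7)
The approach is to identify $G$ with the cyclic subgroup $\langle\omega\rangle \subseteq (\ZZ/p\ZZ)^{\times}$ generated by a primitive cube root of unity $\omega$, which exists since $|G|=3$ forces $3 \mid p-1$, and then transfer the problem to the Eisenstein integers $\ZZ[\omega]$ via the reduction map $\ZZ[\omega] \twoheadrightarrow \ZZ/p\ZZ$. Existence of $(a,b)$ with $a^2-ab+b^2=p$ is immediate from the splitting $p = \pi\bar\pi$ in $\ZZ[\omega]$, writing $\pi = a+b\omega$; Lemma \ref{lem:twosolsfor3case} furnishes exactly two solutions $(a,b)$ and $(a,a-b)$, producing the two values $m=a+b$ and $n=2a-b$ of $a+b$.

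By Theorem \ref{thm:partition}, $s \in \chart{\HH}$ iff there exist $a_0,a_1,a_2 \ge 0$ summing to $s$ with $a_0+a_1\omega+a_2\omega^2 \equiv 0 \pmod p$; since $\omega^2 = -1-\omega$, this is equivalent to $(a_0-a_2)+(a_1-a_2)\omega \in (\pi)$ in $\ZZ[\omega]$. I define $\mathrm{minsum}(z)$ to be the minimum of $a_0+a_1+a_2$ over non-negative representations of $z = x+y\omega \in \ZZ[\omega]$; a direct computation gives $\mathrm{minsum}(z) = x+y+3\max(0,-x,-y)$, and every achievable sum for $z$ equals $\mathrm{minsum}(z)+3k$ for some $k \ge 0$. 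The triples $(1,1,1)$, $(a,b,0)$, and $(0,a-b,a)$ witness $3,m,n \in \chart{\HH}$, so since $\chart{\HH}$ is a submonoid of $\NN$, the problem reduces to showing $\mathrm{minsum}(\pi r) \in \langle 3,m,n\rangle$ for every nonzero $r \in \ZZ[\omega]$.

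Two ingredients drive this. First, $\mathrm{minsum}$ is invariant under multiplication by $\omega$, so rotating $\pi r$ into the sector where both basis-coordinates are non-negative and applying $(x+y)^2 = (x^2-xy+y^2)+3xy$ gives the geometric bound $\mathrm{minsum}(\pi r)^2 \ge p\cdot\Norm{r}$. Second, expanding $(a+b\omega)(c+d\omega)$ shows $\mathrm{minsum}(\pi r) \equiv m(c+d) \pmod 3$ while $\Norm{r} \equiv (c+d)^2 \pmod 3$, for $r = c+d\omega$. For $r$ a unit, a direct calculation yields $\mathrm{minsum}(\pi r) = m$ when $r \in \{1,\omega,\omega^2\}$ and $\mathrm{minsum}(\pi r) = n$ when $r \in \{-1,-\omega,-\omega^2\}$. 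For non-unit $r$, I split on $(c+d) \bmod 3$: when $c+d \equiv 0$, $\mathrm{minsum}$ is a positive multiple of $3$, hence in $\langle 3\rangle$; when $c+d \equiv 1$, $\Norm{r} \equiv 1 \pmod 3$ combined with $\Norm{r} \ge 3$ forces $\Norm{r} \ge 4$, so $\mathrm{minsum}^2 \ge 4p$, and the identity $4p-m^2 = 3(a-b)^2 > 0$ gives $\mathrm{minsum} > m$; combined with the residue $\equiv m \pmod 3$ this places $\mathrm{minsum} \in \{m+3, m+6, \ldots\} \subseteq \langle 3,m\rangle$. The case $c+d \equiv 2$ is symmetric, using $4p-n^2 = 3b^2 > 0$.

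Minimality of $\{3,m,n\}$ follows from residue considerations together with $m<2n$ and $n<2m$, verified from $m+n = 3a$ and $n-m = a-2b$: since $m$ and $n$ have distinct nonzero residues mod $3$, representing either from the other and $3$ would require at least two copies of the other, exceeding it. The main obstacle I anticipate is controlling non-units with small norm: the bare bound $\mathrm{minsum}(\pi r) \ge \sqrt{p\Norm{r}}$ is weaker than $\max(m,n)$ already when $\Norm{r} = 3$, so the residue refinement via $(c+d) \bmod 3$ together with the sharp identities $4p-m^2 = 3(a-b)^2$ and $4p-n^2 = 3b^2$ is essential. These identities are strictly positive precisely because $p$ is prime and $\ge 7$, forcing $a > b \ge 1$; this is exactly what the hypothesis $p \ge 7$ encodes.
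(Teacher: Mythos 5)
Your proposal is correct, but it follows a genuinely different route from the paper's. The paper also works in $\ZZ[\omega]$, but its strategy is: reduce any three-term relation to a two-term one $a\omega+b\equiv 0\pmod p$, deduce $p\mid(a+b)(a^2-ab+b^2)$, discard the branch $p\mid a+b$ via the continuity lemma (Lemma \ref{lem:continuity}), and then show that among positive pairs $(c,d)$ with $c^2-cd+d^2=kp$ the least values of $c+d$ occur at $k=1$: it rules out $k=2$ (2 is not an Eisenstein norm), disposes of $k>3$ by the size bound $c+d>\sqrt{kp}\ge 2\sqrt p>a+b$, and settles $k=3$ by explicitly multiplying $a+b\omega$ by the associates of $1+\omega$ and comparing coordinate sums. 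You replace that case analysis by monoid-level bookkeeping on the prime ideal $(\pi)$: the $\omega$-invariance of your $\mathrm{minsum}$ together with $(x+y)^2=\Norm{x+y\omega}+3xy$ gives $\mathrm{minsum}(\pi r)^2\ge p\,\Norm{r}$, and the congruences $\mathrm{minsum}(\pi r)\equiv m(c+d)\pmod 3$, $\Norm{r}\equiv(c+d)^2\pmod 3$, combined with the identities $4p-m^2=3(a-b)^2$ and $4p-n^2=3b^2$, place every characteristic element in $\langle 3\rangle$, $\langle 3,m\rangle$, or $\langle 3,n\rangle$ uniformly in $k$. This buys two things the paper leaves largely implicit: an actual proof that the whole characteristic is contained in the monoid generated by $\{3,m,n\}$ (the paper only shows $m,n$ are the smallest sums arising from norm-$kp$ pairs), and an explicit minimality argument via the distinct nonzero residues of $m,n$ modulo $3$ together with $m<2n$ and $n<2m$; the paper's approach, in exchange, is more elementary and concrete at the $k=3$ step. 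In a final write-up you should spell out two small points you currently use tacitly: that at least one of $z,\omega z,\omega^2 z$ has both coordinates nonnegative (the three $120^\circ$ cones cover the plane), and the WLOG that the kernel of $\ZZ[\omega]\to\ZZ/p\ZZ$ admits a generator $a+b\omega$ with $a>b>0$ (if the conjugate prime is the kernel, $m$ and $n$ simply exchange roles, which is harmless since the statement is symmetric in $m,n$).
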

\begin{proof}
Let $F = \ZZ/p\ZZ$, with prime $p \equiv 1 \pmod{3}$, and $G= \langle \omega, \omega^2, 1 \rangle$, where $\omega$ is a third root of unity. Since $1\times\omega +1\times\omega^2+1\times1=0$, we see that $1+1+1=3$ is an element of $\gen{F/G}$. To find the other generators of the generating set, it suffices to only consider the equation $a\omega+b=0$. \par   
Consider the equation $a_1\omega+a_2\omega^2+a_3=0$. Let $m=\min(a_1,a_2,a_3)$. We know that since $\omega+\omega^2+1=0$, we have $m(\omega + \omega^2 + 1) = 0$. Hence
\begin{align*}
a_1\omega+a_2\omega^2+a_3 - m(\omega + \omega^2 + 1)&=0\\
(a_1-m)\omega + (a_2-m)\omega^2+(a_3-m)&=0. 
\end{align*}
Since $m=\min(a_1,a_2,a_3)$, one of these terms is zero. We can then multiply by some power of $\omega$ to yield an equation in the form $a\omega+b=0$.\par

Then, we have $a\omega \equiv -b$ (mod p), so cubing yields $a^3 \equiv -b^3$ (mod p). So, $p$ necessarily divides $a^3+b^3=(a+b)(a^2-ab+b^2)$ if we want $a+b \in \chart{F/G}$. However, we know if $p|(a+b)$, then $a+b \ge p$, so by Lemma \ref{lem:continuity}, $a+b \in \chart{F/G}$ automatically. Hence we only need to consider $p|(a^2-ab+b^2)$ for generators of $\chart{F/G}$. \par

In fact, we claim that the two values of $a+b$ such that $a^2-ab+b^2=p$, are not only in $\chart{F/G}$ but also the other two elements of the generating set of $\chart{F/G}$.\par

Without loss of generality, let $a> b$. Let $(a,b), (a,a-b)$ are the solutions to $x^2-xy+y^2=p$ (we know these are the only two by Lemma \ref{lem:twosolsfor3case}). It suffices to show that if all pairs of positive integers $(c,d)$ satisfy $c^2-cd+d^2 = kp$ for integer $k>1$, then $\max(a+b,2a-b) \leq c+d$. \par 

We first show that $k \leq 3$. Suppose $k>3$. Noting $c,d$ are positive, we have $$(c+d)^2 > c^2-cd+d^2 = kp \implies c+d > \sqrt{kp}.$$

Furthermore, $$a^2-ab+b^2 = (a-b)^2 + ab = p \implies ab < p.$$ 

Thus, $$a^2-ab+b^2=(a+b)^2-3ab=p \implies (a+b)^2 = p+3ab < 4p \implies a+b < 2\sqrt{p}.$$ 

But for integer $k>3$, we have $$a+b < 2\sqrt{p} \le \sqrt{kp} < c+d.$$ 

Hence if $k>3$, then $\max(a+b,2a-b)\le c+d$.\par 

Furthermore, $c^2-cd+d^2 = kp = k(a^2-ab+b^2)$, so since norms are multiplicative $k$ must be the norm of some Eisenstein integer $f+g\omega$.  We have that $k\neq 2$ since $f^2-fg+g^2 = (f+g)^2 - 3fg \equiv 2 \pmod{3}$ has no solutions. \par

Now consider $k=3$. By process of elimination from above, $c^2-cd+d^2 = 3p$. \par 

Let $x+y\omega \in \ZZ[\omega]$ and $\Norm{x+y\omega} = 3$. From above, $c+d\omega = (x+y\omega)(a+b\omega)$ for positive integers $a,b,c,d$. Again, without loss of generality let $a>b$. (If $a<b$, then take the associate $-\omega(a+b\omega)=b+(b-a)\omega$.) As $\Norm{1+\omega} = 3$, $x+y\omega$ must be some associate of $1+\omega$. Taking all products of $(a+b\omega)$ by an associate of $1+\omega$ yield
$$(c,d) = (a+b,2b-a),(a-2b,2a-b),(2a-b,b+a).$$
If $a>2b$, then the minimum value of $c+d$ such that $c,d >0$ is $(a-2b)+(2a-b) = 3a-3b$. However, $$a>2b \implies 2a>4b \implies a+b < 3a-3b = c+d.$$ If $a<2b$, then $\min(c+d)$ becomes $(a+b)+(2b-a) = 3b$, and $a<2b \implies a+b < 3b = c+d$.\par 
Therefore, if $p|(x^2-xy+y^2)$ for positive integers $x,y$, then $\min(x+y)$ is achieved when $x^2-xy+y^2 = p$. By Lemma \ref{lem:twosolsfor3case}, there are exactly two solutions $(a_1,b_1), (a_2,b_2)$ to this equation, and hence $a_1+b_1$ and $a_2+b_2$ are generators of $\chart{F/G}$.
\end{proof}

\begin{theorem}\label{thm:cardG4}
Let $\HH$ be an $\pn-$hyperfield with $|G|=4$ and $p\ge 5$. Let $(a,b)$ be the positive integral solution to $a^2+b^2=p$ and let $n=a+b$. Then the generating set of $\gen{\chart{\HH}}$ is $\{2,n \}$.
\end{theorem}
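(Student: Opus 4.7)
The plan is to prove $\chart{\HH} = \langle 2, n\rangle$ by establishing both inclusions. Since $|G|=4$, we need $4\mid p-1$, so $p \equiv 1 \pmod 4$; by Fermat's two-square theorem, $p = a^2+b^2$ has a unique solution in positive integers (up to ordering). Also $G = \langle i_0\rangle$ where $i_0^2 \equiv -1 \pmod p$, and after choosing the sign of $i_0$ we may assume $a + bi_0 \equiv 0 \pmod p$. Note that $n = a+b$ is odd, since $p$ odd forces exactly one of $a,b$ to be even.

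For the forward inclusion, applying Theorem \ref{thm:partition} to the tuples $(1,0,1,0)$ and $(a,b,0,0)$ immediately gives $2, n \in \chart{\HH}$. Since $n$ is odd, $\langle 2,n\rangle$ is exactly the set of non-negative integers that are either even or at least $n$, and each such integer is realized by a suitable padding of these tuples; for instance, $s = n + 2k$ is witnessed by $(a+k, b, k, 0)$, and $s = 2k$ by $(k,0,k,0)$.

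For the reverse inclusion, it suffices to show no odd integer less than $n$ lies in $\chart{\HH}$. Suppose $s \in \chart{\HH}$ is odd, with witness tuple $(a_1, a_2, a_3, a_4)$, and set $c = a_1-a_3$, $d = a_2-a_4$. The relation $c + di_0 \equiv 0 \pmod p$ combined with $i_0^2 \equiv -1$ gives $p \mid c^2+d^2$. A parity check shows $s \equiv c+d \pmod 2$, so $c+d$ is odd; hence $c^2+d^2$ is odd, and writing $c^2+d^2 = pk$ forces $k$ odd and positive. The main obstacle is controlling $|c|+|d|$ in terms of $k$: if $k = 1$, uniqueness in Fermat's theorem forces $\{|c|, |d|\} = \{a, b\}$, so $|c|+|d| = n$; if $k \ge 3$, then $(|c|+|d|)^2 \ge c^2+d^2 \ge 3p > n^2$, using $n^2 = p+2ab < 2p$ (strict since $a \ne b$ for $p \ge 5$). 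Either way $s \ge |c|+|d| \ge n$. Finally, minimality of $\{2, n\}$ follows from $n$ being odd, since omitting either element loses either every odd number or the element $n$ itself.
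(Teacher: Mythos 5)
Your proposal is correct and follows essentially the same route as the paper: reduce a four-term witness over $G=\{1,i_0,-1,-i_0\}$ to a two-term relation $c+di_0\equiv 0\pmod p$ by cancelling $\pm$ pairs (at an even cost), deduce $p\mid c^2+d^2$, and compare $|c|+|d|$ for $c^2+d^2=p$ against $c^2+d^2=kp$, $k\ge 2$, via the inequality $n^2=p+2ab<2p\le kp$. Your write-up is somewhat more careful than the paper's (explicit parity tracking of $s$, the sign choice making $a+bi_0\equiv 0$, and the two-inclusion structure), but the key decomposition and bound are the same.
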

\begin{proof}
Let $F = \ZZ/p\ZZ$ for prime $p \equiv 1 \pmod{4}$. By Lemma \ref{lem:existence_generator}, let $g$ be the generator of the subgroup $G$. Then $g$ is a fourth root of unity, so $G = \langle g, g^2, g^3, g^4  \rangle = \langle g, -1, -g, 1 \rangle$. If we wish to find the characteristic of $\HH$, by Proposition \ref{prop:checkfirst}, it suffices to find all positive integers $n$ such that $0 \in n[1] = n\{ g,-1,-g,1 \}$. \par

Because $1 \cdot 1 + 1 \cdot (-1) = 0$, $1+1=2$ must be a generator. Then, there can only be one other possible generator, which can be found by considering only $ag+b=0$. To see why, note that we can take the equation $a_1g-a_2-a_3g+a_4=0$ and subtract $0=\min(a_1,a_3)(g-g)+\min(a_2,a_4)(-1+1)$ (thus $a_1+a_2+a_3+a_4 - 2(\min(a_1,a_3) + \min(a_2,a_4) \in \chart{F/G}$) and choose the more convenient value of $g$. Then, notice that $ag \equiv -b$ mod $p$, so $-a^2 \equiv b^2$ mod $p$. Thus, $p$ necessarily divides $a^2+b^2$. In fact, we claim that the value of $a+b$ where $a^2+b^2=p$ is not only in $\chart{F/G}$ but also the other generator of $\chart{F/G}$. \par

Note that the maximum value of $a+b$ when $a^2+b^2=p$ occurs when $a=b$, so $\max(a+b)=\sqrt{2p}$. The minimum value of $a+b$ when $a^2+b^2=kp$, where $k>1$, however, occurs when one of $a$ or $b$ is equal to zero, so $\min(a+b)=\sqrt{kp} \ge \sqrt{2p}$. So, the other generator is the smallest value of $a+b$ when $p|a^2+b^2$, which occurs when $a^2+b^2=p$. It is well-known that the pair of natural numbers $(a,b)$ is the unique solution up to ordering, so the value of $n$ is also unique.
\end{proof}

\begin{conj}
For a given generating set $\{2,n\}$ where $n$ is a positive odd integer, then there must exist a hyperfield $\HH$ such that $\gen{\HH} = \{2,n \}$.  
\end{conj}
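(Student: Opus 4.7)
The plan is to realize $\HH$ as a $(p,4)$-hyperfield via Theorem \ref{thm:cardG4}. First observe that the generating set $\{2,n\}$ produced by that theorem always has $n$ odd: for any odd prime $p = a^2 + b^2$, exactly one of $a,b$ is even and the other odd, so $n = a + b$ is necessarily odd. Thus if every odd $n \geq 3$ can be written as $a + b$ for positive integers $a, b$ with $a^2 + b^2$ prime, the conjecture follows immediately.

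Setting $b = n-a$, the problem reduces to the following number-theoretic question: for every odd $n \geq 3$, show that the quadratic $f(a) := 2a^2 - 2an + n^2$ takes at least one prime value as $a$ ranges over $1 \leq a \leq (n-1)/2$. The polynomial $f$ is irreducible over $\mathbb{Q}$ (its discriminant is $-4n^2 < 0$), and all of its integer values lie in the arithmetic progression $\equiv 1 \pmod{4}$, so there is no local obstruction to $f$ representing primes.

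The main obstacle is precisely that establishing even one prime value of $f$ in the specified finite range, uniformly in $n$, is a special case of Bunyakovsky's conjecture (and a fortiori of Schinzel's Hypothesis H), which is open in general. The Bateman--Horn heuristic predicts a substantial density of primes among $f(1), f(2), \ldots, f((n-1)/2)$, and the computational data in Table \ref{table:geq1} together with the extended tables in the appendix confirm the conjecture for all tabulated $n$. An unconditional proof along these lines would therefore require breakthrough progress on this well-known barrier.

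A natural workaround, avoiding Bunyakovsky's conjecture altogether, would be to look beyond the $(p,4)$-hyperfield family: if for some odd $n$ no prime representation $p = a^2 + b^2$ satisfies $a + b = n$, a hyperfield with generating set $\{2, n\}$ might still arise from a $(p, m)$-hyperfield with $m \neq 4$ or from some more general quotient construction. Systematically classifying which generating sets of the form $\{2, n\}$ arise from $(p, m)$-hyperfields for each $m$, and hoping that these families collectively cover every odd $n$, would be the natural next step and may be tractable even when the direct $|G| = 4$ approach is not.
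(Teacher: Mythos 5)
This statement is a conjecture that the paper itself leaves unproven, and your assessment matches the paper's own remark: the paper likewise reduces it, via Theorem \ref{thm:cardG4}, to the existence of a prime $p=a^2+b^2$ with $a+b=n$ (phrased there as a Gaussian prime at distance $n$ from the line $y=x$), notes this is a hard open problem about prime values of a quadratic form, and falls back on computational verification. Your Bunyakovsky/Bateman--Horn framing and the suggested search over other $(p,m)$-families are reasonable additions, but the core reduction and the conclusion that no unconditional proof is currently available are essentially the same as the paper's.
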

\begin{rem}
By Theorem \ref{thm:cardG4},  we have that $a^2+b^2 = (a+bi)(a-bi)=p$. Note how $a^2+b^2$ is the norm form in the Gaussian integers $\ZZ[i]$. Let $\pi\in \ZZ[i]$. It is a canonical fact that if $\Norm{\pi}$ is prime, then $\pi$ is a prime in the Gaussian integers. Since $a+b=n$, the conjecture reduces to showing that there always exists a Gaussian prime that is $n$ away from the line $y=x$ for all odd $n$. This is a very explicit statement regarding the distribution of the Gaussian primes, leading us to believe that this is a very difficult conjecture to prove. \par

We have verified this result computationally up to $n=89441$.     
\end{rem}

\section{Finite Criterion for Generating Set of  \texorpdfstring{$(p,n)$}-hyperfield}\label{sec:finitecriterion}
Theorems \ref{thm:cardG3} and \ref{thm:cardG4} provide an explicit method for computing generators from solutions to bivariate polynomials. We aim to generalize these theorems and find, for every $n$ such that $|G| = n$, a corresponding polynomial whose solutions can compute generators. We first give a finite criterion for when $n$ is prime, before generalizing to all positive integers $n$. \par

We first define the norm of a cyclotomic field.
\begin{defn}
Let $f(\zeta_n) = f_0 + f_1\zeta_n + \cdots + f_k\zeta_n^k \in \ZZ[\zeta_n]$, where $\zeta_n$ is a primitive $n$-th root of unity. The \textit{norm} of $f(\zeta_n)$ is the integer
        $$\Norm{f(\zeta_n)} = \prod_{i \in (\ZZ/n\ZZ)^{\times}} f(\zeta_n^i).$$
\end{defn}
In other words, the norm of $f \in \ZZ[\zeta_n]$ is the product of all Galois conjugates of $f$, including itself. \par 

We are now prepared to construct a generating set of $(p,q)$-hyperfields for prime $q$.
\begin{theorem}\label{thm:genprime}
Let $q$ be a prime. Let $\zeta_q$ be the primitive $q$-th root of unity. The chracteristic of the $(p,q)$-hyperfield is generated by the set $$\{p,q\} \bigcup \left \{\left. \sum_{i=0}^{q-2} a_i \text{ } \right \vert N\left( {\sum_{i=0}^{q-2} a_i\zeta_q^i}\right)\equiv 0\pmod p \right \},$$such that $a_i$ are integers with $0 \leq a_i < p$.
\end{theorem}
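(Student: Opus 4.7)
The plan is to prove both containments between $S := \chart{(p,q)\text{-hyperfield}}$ and the submonoid generated by $\{p,q\} \cup A$, where $A$ denotes the set of sums described in the statement. By Lemma \ref{lem:existence_generator}, choose a generator $g$ of $G$, so that $G = \{1, g, g^2, \dots, g^{q-1}\}$ and $g$ is a primitive $q$-th root of unity in $\ZZ/p\ZZ$; in particular $q \mid p-1$, so $p \neq q$. The substitution $\zeta_q \mapsto g$ then defines a ring homomorphism $\pi \colon \ZZ[\zeta_q] \to \ZZ/p\ZZ$ whose kernel $\mathfrak{p}$ is a prime above $p$. Since $p$ is unramified in $\ZZ[\zeta_q]$ with residue field $\ZZ/p\ZZ$, the $q - 1$ primes above $p$ are precisely the ideals $\sigma_j^{-1}(\mathfrak{p})$, one for each Galois automorphism $\sigma_j \colon \zeta_q \mapsto \zeta_q^j$ with $j \in (\ZZ/q\ZZ)^{\times}$. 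The key algebraic fact I would use throughout is that $p \mid N(\alpha)$ if and only if $\alpha \in \sigma_j^{-1}(\mathfrak{p})$ for some $j$, equivalently $\pi(\sigma_j(\alpha)) = \sum_i a_i g^{ji} = 0$ in $\ZZ/p\ZZ$ for some $j$.

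For the forward containment $\{p,q\} \cup A \subseteq S$: the integer $p$ lies in $S$ via $p \cdot 1 = 0$, and $q$ lies in $S$ via $1 + g + \cdots + g^{q-1} = 0$. For $n = \sum_{i=0}^{q-2} a_i \in A$, the norm condition produces some $j$ with $\sum_{i=0}^{q-2} a_i g^{ji} = 0$ in $\ZZ/p\ZZ$. Since $\{g^{ji} : 0 \le i \le q-2\}$ is a set of $q-1$ distinct elements of $G$, this exhibits $n$ as the coefficient sum of a nonnegative integer relation among elements of $G$, and Theorem \ref{thm:partition} gives $n \in S$.

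For the reverse containment, I would start from $n \in S$ and invoke Theorem \ref{thm:partition} to write $n = \sum_{i=0}^{q-1} b_i$ with $b_i \ge 0$ and $\sum b_i g^i \equiv 0 \pmod p$. Three successive reductions then produce the desired decomposition. First, subtract $m := \min_i b_i$ from every coefficient; using the cyclotomic relation $\sum_i g^i = 0$, this preserves the vanishing in $\ZZ/p\ZZ$ and decreases $n$ by $qm$, while creating at least one zero coefficient. Second, multiply the relation by an appropriate power of $g$, a cyclic shift of indices that fixes the coefficient sum, to move that zero into position $q-1$. Third, reduce each remaining coefficient modulo $p$, which changes $n$ by a nonnegative multiple of $p$ and leaves the relation intact. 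What remains is a tuple $(a_0, \dots, a_{q-2})$ with $0 \le a_i < p$ and $\sum a_i g^i = 0$ in $\ZZ/p\ZZ$, so $\sum a_i \zeta_q^i \in \mathfrak{p}$, forcing $p \mid N(\sum a_i \zeta_q^i)$ and $\sum a_i \in A$. Combining the three steps yields $n = \sum a_i + qm + pk$ for some $k \ge 0$, exhibiting $n$ in the submonoid generated by $\{p,q\} \cup A$.

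The main obstacle is bridging the algebraic structure of $\ZZ[\zeta_q]$ (norms, Galois conjugates, splitting of $p$) with the combinatorial criterion of Theorem \ref{thm:partition}. In the forward direction, the condition $p \mid N(\alpha)$ a priori only supplies a relation after a Galois twist, and one must observe that passing from $g$ to $g^j$ simply permutes the coefficients $a_i$ among elements of $G$ while preserving their integer sum. In the reverse direction, the delicate point is orchestrating the reductions so that the two systematic sources of slack, the relation $1 + g + \cdots + g^{q-1} = 0$ and reduction modulo $p$, are peeled off as multiples of the generators $q$ and $p$, leaving behind precisely an element in the normal form ($a_{q-1} = 0$ and $0 \le a_i < p$) that defines $A$.
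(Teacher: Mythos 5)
Your proof is correct, and its core engine is the same as the paper's: translate membership in $\chart{\HH}$ into relations $\sum b_i g^i \equiv 0 \pmod p$ via Theorem \ref{thm:partition}, reduce to a normal form with at most $q-1$ coefficients in $[0,p)$ by peeling off multiples of $q$ (using $1+g+\cdots+g^{q-1}=0$) and of $p$ (reducing coefficients mod $p$), and pass between $\mathbb{F}_p$-relations and cyclotomic norms through the substitution $\zeta_q \mapsto g$. The packaging differs in two ways worth noting. First, where the paper constructs this map by hand (Lemma \ref{lem:homomorph}, together with the $p$-norm of Definition \ref{def:pnorm}), you invoke the standard reduction map $\ZZ[\zeta_q]\to\mathbb{F}_p$ with kernel a prime $\mathfrak{p}$ above $p$, and use the complete splitting of $p$ (since $q\mid p-1$) to get the clean equivalence that $p\mid \Norm{\alpha}$ exactly when some Galois conjugate of $\alpha$ vanishes modulo $\mathfrak{p}$. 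Second, and more substantively, you prove both containments: the paper only shows that each minimal generator lies in the stated set (its Lemma \ref{lem:qminusone} argues via minimality), which gives $\chart{\HH}\subseteq\langle S\rangle$ but does not by itself show $\langle S\rangle\subseteq\chart{\HH}$; your forward direction --- a norm divisible by $p$ yields, after a Galois twist that merely permutes which elements of $G$ carry which coefficients while fixing their sum, a genuine relation, hence membership in $\chart{\HH}$ by Theorem \ref{thm:partition} --- supplies exactly this missing half. Your reverse direction also avoids reasoning about the minimal generating set by decomposing an arbitrary $n\in\chart{\HH}$ directly as $n=\sum a_i+qm+pk$, which is cleaner and absorbs the degenerate cases (all coefficients equal, or all divisible by $p$) automatically.
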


\begin{proof}

Let $S$ be the set defined above and $T$ be the minimal generating set. It suffices to show that $T$ is a subset of $S$. \par 

Let $g$ be a generator in subgroup $G$. Every element in the characteristic must be of the form $a_0+a_1+ \cdots + a_{q-1}$, where $a_i$ are non-negative integers and $a_0 + a_1g + \cdots + a_{q-1}g^{q-1}= 0$ in $\mathbb{F}_p$. However, we can reduce this equation. Let $\min (a_0, a_1, \cdots, a_{q-1}) = a_i$. Then,
\begin{align*}
    0 &= g^{q-i-1} \left(a_0 + a_1g + \cdots + a_{q-1}g^{q-1} \right)  \\
     &= a_0g^{q-i-1} + \cdots + a_ig^{q-1} + \cdots + a_{q-1}g^{2q-i-2} \\
    &= a_{i+1} + a_{i+2}g + \cdots + a_{q-1}g^{q-i-2} + a_0g^{q-i-1} + \cdots + a_ig^{q-1}\\
    &= (a_{i+1} - a_i) + (a_{i+2} - a_i)g + \cdots + (a_{i-1}-a_i)g^{q-2}, \\
\end{align*}
with all $a_j-a_i$ being non-negative by minimality of $a_i$.\par 

Note that this reduction yields all zero coefficients if either all the coefficients $a_i$ were originally 1) equal, or 2) divisible by $p$. (In other words, all $a_i$ are congruent modulo $p$.) If all coefficients were equal, then $a_0 + a_1 + \cdots + a_{q-1} = q \cdot a_0$, so $q$ is a generator. If all coefficients are $0$ mod $p$, then the sum of coefficients is some multiple of $p$, so $p$ is also included in the generating set. Note, however, that because usually $p$ is significantly large, $p$ is not in the minimal generating set for any $(p,n)$-hyperfield. We thus assume that $p$ is never in the minimal generating set, which we have verified computationally.\par

We claim that all other generators are produced from the sum of coefficients of some expression in terms of $g$ with at most $q-1$ terms.
\begin{lemma}\label{lem:qminusone}
Let $m \neq p,q$ be in the minimal generating set. Then $$m = b_0 + b_1 + \cdots + b_{q-2},$$ where $b_i$ are elements in $\mathbb{F}_p$ such that $b_0 + b_1g + \cdots + b_{q-2}g^{q-2} = 0$ in $\mathbb{F}_p$.
\end{lemma}
\begin{proof}
Suppose that $m = c_0 + \cdots + c_{q-1}$ such that $\gamma = c_0 + c_1g + \cdots + c_{q-1}g^{q-1} \equiv 0 \pmod{p}$, with $c_i$ being integers. If one coefficient $c_j$ is zero, then we can multiply $\gamma$ by some power of $g$, shift indices, and reach the desired result. Furthermore, if all coefficients are equal, then $m=q$, contradiction. Thus, assume all $c_i$ are positive and not all equal. \par 

We first prove that $c_i \in \mathbb{F}_p$. Let $c_i \equiv d_i \pmod{p}$.
\begin{align*}
    c_0 + c_1g + \cdots + c_{q-1}g^{q-1} &\equiv 0 \pmod{p} \\
    \implies d_0 + d_1g + \cdots + d_{q-1}g^{q-1} &\equiv 0 \pmod{p}.
\end{align*}
Let $m' = d_0 + \cdots + d_{q-1}$. Because $c_i \equiv d_i \pmod{p}$, $m = m' + kp$ for some non-negative integer $k$. Thus, $m$ can be generated by $m'$ and $p$, so $m$ cannot be in the minimal generating set. Thus, all coefficients $c_i$ must be in $\mathbb{F}_p$. \par 

Suppose again that $m = c_0 + \cdots + c_{q-1}$ such that $\gamma = c_0 + c_1g + \cdots + c_{q-1}g^{q-1} \equiv 0\pmod{p}$, with $0 \leq c_i < p$. Assume that all $c_i$ are positive and not all equal. Without loss of generality let $\min \{c_0, \cdots, c_{q-1}\} = c_{q-1}$ (if not, then multiply $\gamma$ by some power of $g$ and shift indices such that the coefficient of $g^{q-1}$ is minimal). We can use our reduction method shown earlier to get 
$$\gamma = (c_0 - c_{q-1}) + (c_1 - c_{q-1})g + \cdots + (c_{q-2} - c_{q-1})g^{q-2} = 0.$$
Note that all coefficients of $g^i$ are non-negative by minimality of $c_{q-1}$. Let $m'' = (c_0-c_{q-1}) + \cdots + (c_{q-2} + c_{q-1})$. We see that
$$m'' + q \cdot c_{q-1} = \left(\sum_{i=0}^{q-2} c_i-c_{q-1}\right) + q \cdot c_{q-1} = \sum_{i=0}^{q-1} c_i = m,$$
so $m$ can be generated by $m''$ and $q$. Therefore, $m$ cannot be in the minimal generating set, and all elements in the minimal generating set are either $q$ or expressed as the sum of at most $q-1$ terms.
\end{proof}\par 
For notation, we let $M = T \backslash \{q\}$, or the set of all generators in the minimal generating set that are not equal to $q$. From our theorem, it then suffices to prove that 
$$M \subseteq \left \{\left. \sum_{i=0}^{q-2} a_i \text{ } \right \vert N\left( {\sum_{i=0}^{q-2} a_i\zeta_q^i}\right)\equiv 0\pmod p \right \}$$
for $a_i \in \mathbb{F}_p$.\par 

We now define a norm analogue for $\mathbb{F}_p$.
\begin{defn}\label{def:pnorm}
Let $n,p$ be positive integers such that $p$ is prime and $n|p-1$. Let $f(g) = f_0 + f_1g + \cdots + f_kg^k \in \mathbb{F}_p$, where $g$ is a primitive $n$-th root in $\mathbb{F}_p$. The $p$-norm of $f(g)$ is the integer
$$N'(f(g)) = \prod_{i \in (\ZZ/n\ZZ)^{\times}} f(g^i).$$
\end{defn}
Let 
\begin{align*}
    \alpha_1(\zeta_n) &= a_0 + a_1\zeta_n + \cdots + a_k\zeta_n^k \in \mathbb{F}_p[\zeta_n] \\
    \alpha_2(g) &= a_0 + a_1g + \cdots + a_kg^k \in \mathbb{F}_p,
\end{align*} 
where $\zeta_n$ is a primitive $n$-th root of unity and $g$ is a primitive $n$-th root in $\mathbb{F}_p$. To compare $\Norm{\alpha_1}$ and $N'(\alpha_2)$, we prove the following lemma:
\begin{lemma}\label{lem:homomorph}
There exists a homomorphism $\rho : \mathbb{F}_p[\zeta_n] \rightarrow \mathbb{F}_p$ that maps $\zeta_n$ to $g$ and every element in $\mathbb{F}_p$ to itself.
\end{lemma} 
\begin{proof}
To show that a homomorphism exists, it suffices to prove the following statement.
\begin{lemma}\label{lem:linearmap}
Let $\zeta_n$ be a primitive $n$-th root of unity and $g$ be a primitive $n$-th root in $\mathbb{F}_p$. Let $V_1, V_2$ be vector spaces over field $\mathbb{F}_p$ with bases $\{1,\zeta_n, \cdots, \zeta_n^{n-1}\}$ and $\{1,g,\cdots, g^{n-1}\}$, respectively. Then there exists a linear map $f: V_1 \rightarrow V_2$ that maps $\zeta_n^k$ to $g^k$.
\end{lemma}
\begin{proof}
Let $\vec{v}_1$, $\vec{v}_2$ be vectors in $V_1$, and $c$ is in field $\mathbb{F}_p$. We will prove the following:
\begin{align}
    f(\vec{v}_1 + \vec{v}_2) &= f(\vec{v}_1) + f(\vec{v}_2) \label{eq:add}  \\
    f(c \vec{v}_1) &= c f(\vec{v}_2). \label{eq:scalar}
\end{align}
Let $\vec{v}_1 = a_0 + a_1\zeta_n + \cdots + a_{n-1}\zeta_n^{n-1}$ and $\vec{v}_2 = b_0 + b_1\zeta_n + \cdots + b_{n-1}\zeta_n^{n-1}$. The proof for $(1)$ is straightforward:
\begin{align*}
    f(\vec{v}_1 + \vec{v}_2) &= f((a_0+b_0) + (a_1+b_1)\zeta_n + \cdots + (a_{n-1} + b_{n-1})\zeta_n^{n-1}) \\
    &= (a_0+b_0) + (a_1+b_1)g + \cdots + (a_{n-1} + b_{n-1})g^{n-1} \\
    f(\vec{v}_1) + f(\vec{v}_2) &= (a_0 + a_1g + \cdots + a_{n-1}g^{n-1}) + (b_0 + b_1g + \cdots + b_{n-1}g^{n-1}) \\
    &= (a_0+b_0) + (a_1+b_1)g + \cdots + (a_{n-1}+b_{n-1})g^{n-1}
\end{align*}
Therefore $f(\vec{v}_1 + \vec{v}_2) = f(\vec{v}_1) + f(\vec{v}_2)$. \par

The proof for $(2)$ is also straightforward:
\begin{align*}
    f(c\vec{v}_1) &= f(ca_0 + ca_1\zeta_n + \cdots + ca_{n-1}\zeta_n^{n-1} \\
    &= c(a_0 + a_1g + \cdots + a_{n-1}g^{n-1}) \\
    cf(\vec{v}_1) &= cf(a_0+a_1\zeta_n + \cdots + a_{n-1}\zeta_n^{n-1}) \\
    &= c(a_0+a_1g + \cdots + a_{n-1}g^{n-1}),
\end{align*}
so $f(c\vec{v}_1) = cf(\vec{v}_1)$. \par 

Thus, $f$ is a linear map because we know how every vector in the basis of $V_1$ maps to $V_2$ and the two conditions for a linear map hold.
\end{proof}
Because of the existence of a linear map by Lemma \ref{lem:linearmap}, there exists a homomorphism from $\mathbb{F}_p[\zeta_n]$ to $\mathbb{F}_p$ that maps $\zeta_n$ to $g$. We can check that $\rho$ preserves multiplication. Let
\begin{align*}
    x &= x_0 + x_1\zeta_n + \cdots + x_{n-1}\zeta_n^{n-1} \\
    y &= y_0 + y_1\zeta_n + \cdots + y_{n-1}\zeta_n^{n-1}, \\
\end{align*}
with $x,y \in \mathbb{F}_p[\zeta_n]$. Then
\begin{align*}
    \rho (x \cdot y) &= \rho \left(\left(x_0 + \cdots + x_{n-1}\zeta_n^{n-1} \right)\left(y_0 + \cdots + y_{n-1}\zeta_n^{n-1} \right) \right) \\ &=
    \rho \left( \sum_{i=0}^{2n-2} \sum_{\substack{j+k=i \\0 \leq j,k < n}} x_jy_k\zeta_n^i \right) \\ &= \sum_{i=0}^{2n-2} \sum_{\substack{j+k=i \\0 \leq j,k < n}} x_jy_kg^i \\ &= (x_0 + \cdots + x_{n-1}g^{n-1})(y_0 + \cdots + y_{n-1}g^{n-1})\\ &= \rho(x) \cdot \rho(y),
\end{align*}
so $\rho(x\cdot y) = \rho (x) \cdot \rho (y)$ and $\rho$ preserves multiplication.\end{proof}
Let $i \in (\ZZ/q\ZZ)^{\times}$. By Lemma \ref{lem:homomorph}, if $\rho$ is the homomorphism, then $\rho(\alpha_1(\zeta_q^i)) = \alpha_2(g^i)$. Taking the product over all units $i$ in $\ZZ/q\ZZ$, $\rho\left(\Norm{\alpha_1(\zeta_q)}\right) = N'(\alpha_2(g))$. But $\Norm{\alpha_1(\zeta_q)}$ is an integer, so by definition of $\rho$ in Lemma $\ref{lem:homomorph}$,
\begin{equation}
    \Norm{\alpha_1(\zeta_q)} \equiv N'(\alpha_2(g)) \pmod{p}. \label{eq:congruent}
\end{equation}
Suppose $m = b_0 + b_1 + \cdots + b_{q-2}$ is in the generating set for $b_i \in \mathbb{F}_p$. Let $\beta = b_0 + b_1g + \cdots + b_{q-2}g^{q-2} \equiv 0 \pmod{p} \implies p | \beta$. By Definition \ref{def:pnorm},
$$\beta | N'(\beta).$$
Let $\beta (\zeta_q) = b_0 + b_1 \zeta_q + \cdots + b_{q-2} \zeta_q^{q-2}$. By Equation \ref{eq:congruent}, 
\begin{align*}
    N'(\beta) \equiv \Norm{\beta (\zeta_q)} \pmod{p} \\
    \implies p | \beta | N'(\beta) \equiv \Norm{\beta (\zeta_q)} \pmod{p} \\
    \implies \Norm{\beta(\zeta_q)} \equiv 0 \pmod{p}.
\end{align*}
This is true for all generators $m \in M$, so
$$M \subseteq \left \{\left. \sum_{i=0}^{q-2} a_i \text{ } \right \vert N\left( {\sum_{i=0}^{q-2} a_i\zeta_q^i}\right)\equiv 0\pmod p \right \}$$
for $a_i \in \mathbb{F}_p$.\end{proof}




Theorem \ref{thm:genprime} is significant in the following two ways:
\begin{enumerate}
    \item For every hyperfield of type $(p,q)$, with $q$ prime, there exists a homogeneous polynomial $f$ of degree $q-1$ whose solutions produce a generating set, and
    \item The criterion is finite: there are at most $p^{q-1}$ tuples $(a_0, \dots, a_{q-2})$ to test.
\end{enumerate}
In particular, the homogeneous polynomial $f$ whose solutions produce a generating set is the polynomial in $q-1$ variables $a_0,a_1, \cdots, a_{q-2}$ that computes the norm of $a_0 + a_1 \zeta_q + \cdots + a_{q-2} \zeta_q^{q-2}$.\\

We can improve on the number of tuples to test in $(2)$. First, we assume that every generator in the minimal generating set is less than $p$, so it suffices to check only the tuples whose elements sum to some positive value less than $p$. Furthermore, $q$ is always a generator, so we may exclude all tuples whose elements sum to any multiples of $q$. By simple counting arguments, the total number of tuples we must test is at most $$\dbinom{p+q-2}{q-1} - \sum_{k=0}^{\frac{p-q-1}{q}} \dbinom{q-2+kq}{q-2}.$$ \par 

\newpage

\appendix 
\section{Computational Data} \label{sec:appen1}
\tiny
\centering
\begin{tabular} {|c| c| c| c| c| c| c| c| c| c| c| c| c| c| c|}\hline
$p$	&	$|G|$	&	Gen. Set	&	$p$	&	$|G|$	&	Gen. Set	&	$p$	&	$|G|$	&	Gen. Set	&	$p$	&	$|G|$	&	Gen. Set	&	$p$	&	$|G|$	&	Gen. Set	\\\hline
2	&	1	&	{ 2 }	&	43	&	42	&	{ 2 }	&	89	&	2	&	{ 2 }	&	127	&	42	&	{ 2 }	&	173	&	4	&	{ 2 }	\\
3	&	1	&	{ 3 }	&	43	&	21	&	{ 3 5 }	&	89	&	11	&	{ 4 6 7 }	&	127	&	2	&	{ 2 }	&	173	&	2	&	{ 2 }	\\
3	&	2	&	{ 2 }	&	43	&	14	&	{ 2 }	&	89	&	22	&	{ 2 }	&	131	&	1	&	{ 131 }	&	173	&	86	&	{ 2 }	\\
5	&	1	&	{ 5 }	&	47	&	1	&	{ 47 }	&	89	&	44	&	{ 2 }	&	131	&	65	&	{ 3 5 }	&	173	&	172	&	{ 2 }	\\
5	&	2	&	{ 2 }	&	47	&	23	&	{ 3 5 }	&	89	&	8	&	{ 2 }	&	131	&	2	&	{ 2 }	&	173	&	43	&	{ 3 5 }	\\
5	&	4	&	{ 2 }	&	47	&	2	&	{ 2 }	&	89	&	4	&	{ 2 }	&	131	&	10	&	{ 2 }	&	179	&	1	&	{ 179 }	\\
7	&	1	&	{ 7 }	&	47	&	46	&	{ 2 }	&	97	&	4	&	{ 2 }	&	131	&	26	&	{ 2 }	&	179	&	178	&	{ 2 }	\\
7	&	3	&	{ 3 5 }	&	53	&	1	&	{ 53 }	&	97	&	12	&	{ 2 }	&	131	&	13	&	{ 4 6 7 }	&	179	&	2	&	{ 2 }	\\
7	&	6	&	{ 2 }	&	53	&	52	&	{ 2 }	&	97	&	48	&	{ 2 }	&	131	&	5	&	{ 5 9 11 12 }	&	179	&	89	&	{ 3 5 }	\\
7	&	2	&	{ 2 }	&	53	&	13	&	{ 3 5 }	&	97	&	16	&	{ 2 }	&	131	&	130	&	{ 2 }	&	181	&	60	&	{ 2 }	\\
11	&	1	&	{ 11 }	&	53	&	2	&	{ 2 }	&	97	&	6	&	{ 2 }	&	137	&	136	&	{ 2 }	&	181	&	180	&	{ 2 }	\\
11	&	5	&	{ 3 5 }	&	53	&	4	&	{ 2 }	&	97	&	96	&	{ 2 }	&	137	&	4	&	{ 2 }	&	181	&	36	&	{ 2 }	\\
11	&	10	&	{ 2 }	&	53	&	26	&	{ 2 }	&	97	&	2	&	{ 2 }	&	137	&	1	&	{ 137 }	&	181	&	12	&	{ 2 }	\\
11	&	2	&	{ 2 }	&	59	&	1	&	{ 59 }	&	97	&	1	&	{ 97 }	&	137	&	34	&	{ 2 }	&	181	&	18	&	{ 2 }	\\
13	&	1	&	{ 13 }	&	59	&	2	&	{ 2 }	&	97	&	3	&	{ 3 19 }	&	137	&	17	&	{ 4 6 7 }	&	181	&	30	&	{ 2 }	\\
13	&	12	&	{ 2 }	&	59	&	58	&	{ 2 }	&	97	&	24	&	{ 2 }	&	137	&	2	&	{ 2 }	&	181	&	10	&	{ 2 }	\\
13	&	2	&	{ 2 }	&	59	&	29	&	{ 3 5 }	&	97	&	8	&	{ 2 }	&	137	&	8	&	{ 2 }	&	181	&	20	&	{ 2 }	\\
13	&	3	&	{ 3 7 }	&	61	&	30	&	{ 2 }	&	97	&	32	&	{ 2 }	&	137	&	68	&	{ 2 }	&	181	&	1	&	{ 181 }	\\
13	&	4	&	{ 2 }	&	61	&	15	&	{ 3 5 }	&	101	&	1	&	{ 101 }	&	139	&	1	&	{ 139 }	&	181	&	9	&	{ 3 8 }	\\
13	&	6	&	{ 2 }	&	61	&	3	&	{ 3 14 }	&	101	&	50	&	{ 2 }	&	139	&	23	&	{ 4 6 7 }	&	181	&	6	&	{ 2 }	\\
17	&	1	&	{ 17 }	&	61	&	5	&	{ 4 7 }	&	101	&	25	&	{ 3 5 }	&	139	&	46	&	{ 2 }	&	181	&	3	&	{ 3 26 }	\\
17	&	2	&	{ 2 }	&	61	&	1	&	{ 61 }	&	101	&	2	&	{ 2 }	&	139	&	3	&	{ 3 23 }	&	181	&	45	&	{ 3 5 }	\\
17	&	16	&	{ 2 }	&	61	&	10	&	{ 2 }	&	101	&	20	&	{ 2 }	&	139	&	69	&	{ 3 5 }	&	181	&	15	&	{ 3 7 }	\\
17	&	4	&	{ 2 }	&	61	&	60	&	{ 2 }	&	101	&	4	&	{ 2 }	&	139	&	138	&	{ 2 }	&	181	&	5	&	{ 5 7 }	\\
17	&	8	&	{ 2 }	&	61	&	6	&	{ 2 }	&	101	&	100	&	{ 2 }	&	139	&	2	&	{ 2 }	&	181	&	4	&	{ 2 }	\\
19	&	1	&	{ 19 }	&	61	&	20	&	{ 2 }	&	101	&	10	&	{ 2 }	&	139	&	6	&	{ 2 }	&	181	&	90	&	{ 2 }	\\
19	&	18	&	{ 2 }	&	61	&	12	&	{ 2 }	&	101	&	5	&	{ 5 8 9 11 }	&	149	&	1	&	{ 149 }	&	181	&	2	&	{ 2 }	\\
19	&	3	&	{ 3 8 }	&	61	&	2	&	{ 2 }	&	103	&	1	&	{ 103 }	&	149	&	2	&	{ 2 }	&	191	&	38	&	{ 2 }	\\
19	&	2	&	{ 2 }	&	61	&	4	&	{ 2 }	&	103	&	102	&	{ 2 }	&	149	&	4	&	{ 2 }	&	191	&	5	&	{ 5 8 9 11 }	\\
19	&	6	&	{ 2 }	&	67	&	1	&	{ 67 }	&	103	&	3	&	{ 3 20 }	&	149	&	148	&	{ 2 }	&	191	&	19	&	{ 3 5 }	\\
19	&	9	&	{ 3 5 }	&	67	&	33	&	{ 3 5 }	&	103	&	6	&	{ 2 }	&	149	&	37	&	{ 3 5 }	&	191	&	1	&	{ 191 }	\\
23	&	1	&	{ 23 }	&	67	&	6	&	{ 2 }	&	103	&	17	&	{ 3 5 }	&	149	&	74	&	{ 2 }	&	191	&	10	&	{ 2 }	\\
23	&	2	&	{ 2 }	&	67	&	3	&	{ 3 16 }	&	103	&	34	&	{ 2 }	&	151	&	150	&	{ 2 }	&	191	&	190	&	{ 2 }	\\
23	&	22	&	{ 2 }	&	67	&	11	&	{ 4 6 7 }	&	103	&	2	&	{ 2 }	&	151	&	10	&	{ 2 }	&	191	&	95	&	{ 3 5 }	\\
23	&	11	&	{ 3 5 }	&	67	&	2	&	{ 2 }	&	103	&	51	&	{ 3 5 }	&	151	&	3	&	{ 3 23 }	&	191	&	2	&	{ 2 }	\\
29	&	1	&	{ 29 }	&	67	&	66	&	{ 2 }	&	107	&	1	&	{ 107 }	&	151	&	5	&	{ 5 9 11 13 }	&	193	&	6	&	{ 2 }	\\
29	&	7	&	{ 4 6 7 }	&	67	&	22	&	{ 2 }	&	107	&	106	&	{ 2 }	&	151	&	75	&	{ 3 5 }	&	193	&	3	&	{ 3 25 }	\\
29	&	2	&	{ 2 }	&	71	&	1	&	{ 71 }	&	107	&	2	&	{ 2 }	&	151	&	25	&	{ 3 5 }	&	193	&	2	&	{ 2 }	\\
29	&	4	&	{ 2 }	&	71	&	10	&	{ 2 }	&	107	&	53	&	{ 3 5 }	&	151	&	1	&	{ 151 }	&	193	&	192	&	{ 2 }	\\
29	&	28	&	{ 2 }	&	71	&	70	&	{ 2 }	&	109	&	27	&	{ 3 5 }	&	151	&	50	&	{ 2 }	&	193	&	8	&	{ 2 }	\\
29	&	14	&	{ 2 }	&	71	&	2	&	{ 2 }	&	109	&	18	&	{ 2 }	&	151	&	30	&	{ 2 }	&	193	&	96	&	{ 2 }	\\
31	&	1	&	{ 31 }	&	71	&	35	&	{ 3 5 }	&	109	&	54	&	{ 2 }	&	151	&	15	&	{ 3 5 }	&	193	&	64	&	{ 2 }	\\
31	&	5	&	{ 5 7 8 9 }	&	71	&	7	&	{ 4 6 7 }	&	109	&	4	&	{ 2 }	&	151	&	6	&	{ 2 }	&	193	&	1	&	{ 193 }	\\
31	&	10	&	{ 2 }	&	71	&	14	&	{ 2 }	&	109	&	1	&	{ 109 }	&	151	&	2	&	{ 2 }	&	193	&	32	&	{ 2 }	\\
31	&	3	&	{ 3 11 }	&	71	&	5	&	{ 5 7 8 9 }	&	109	&	12	&	{ 2 }	&	157	&	6	&	{ 2 }	&	193	&	48	&	{ 2 }	\\
31	&	6	&	{ 2 }	&	73	&	3	&	{ 3 17 }	&	109	&	2	&	{ 2 }	&	157	&	39	&	{ 3 5 }	&	193	&	16	&	{ 2 }	\\
31	&	2	&	{ 2 }	&	73	&	4	&	{ 2 }	&	109	&	6	&	{ 2 }	&	157	&	2	&	{ 2 }	&	193	&	12	&	{ 2 }	\\
31	&	30	&	{ 2 }	&	73	&	18	&	{ 2 }	&	109	&	36	&	{ 2 }	&	157	&	52	&	{ 2 }	&	193	&	4	&	{ 2 }	\\
31	&	15	&	{ 3 5 }	&	73	&	6	&	{ 2 }	&	109	&	9	&	{ 3 7 }	&	157	&	78	&	{ 2 }	&	193	&	24	&	{ 2 }	\\
37	&	1	&	{ 37 }	&	73	&	36	&	{ 2 }	&	109	&	108	&	{ 2 }	&	157	&	1	&	{ 157 }	&	197	&	4	&	{ 2 }	\\
37	&	36	&	{ 2 }	&	73	&	24	&	{ 2 }	&	109	&	3	&	{ 3 19 }	&	157	&	13	&	{ 4 6 7 }	&	197	&	7	&	{ 5 7 8 9 }	\\
37	&	12	&	{ 2 }	&	73	&	1	&	{ 73 }	&	113	&	8	&	{ 2 }	&	157	&	26	&	{ 2 }	&	197	&	49	&	{ 3 5 }	\\
37	&	2	&	{ 2 }	&	73	&	72	&	{ 2 }	&	113	&	28	&	{ 2 }	&	157	&	3	&	{ 3 25 }	&	197	&	28	&	{ 2 }	\\
37	&	6	&	{ 2 }	&	73	&	8	&	{ 2 }	&	113	&	4	&	{ 2 }	&	157	&	12	&	{ 2 }	&	197	&	1	&	{ 197 }	\\
37	&	9	&	{ 3 5 }	&	73	&	2	&	{ 2 }	&	113	&	56	&	{ 2 }	&	157	&	156	&	{ 2 }	&	197	&	2	&	{ 2 }	\\
37	&	3	&	{ 3 11 }	&	73	&	12	&	{ 2 }	&	113	&	1	&	{ 113 }	&	157	&	4	&	{ 2 }	&	197	&	196	&	{ 2 }	\\
37	&	18	&	{ 2 }	&	73	&	9	&	{ 3 5 }	&	113	&	2	&	{ 2 }	&	163	&	9	&	{ 3 7 }	&	197	&	14	&	{ 2 }	\\
37	&	4	&	{ 2 }	&	79	&	1	&	{ 79 }	&	113	&	7	&	{ 5 7 8 9 }	&	163	&	2	&	{ 2 }	&	197	&	98	&	{ 2 }	\\
41	&	5	&	{ 5 7 8 9 }	&	79	&	2	&	{ 2 }	&	113	&	16	&	{ 2 }	&	163	&	162	&	{ 2 }	&	199	&	9	&	{ 3 7 }	\\
41	&	10	&	{ 2 }	&	79	&	39	&	{ 3 5 }	&	113	&	112	&	{ 2 }	&	163	&	18	&	{ 2 }	&	199	&	2	&	{ 2 }	\\
41	&	40	&	{ 2 }	&	79	&	13	&	{ 4 6 7 }	&	113	&	14	&	{ 2 }	&	163	&	1	&	{ 163 }	&	199	&	99	&	{ 3 5 }	\\
41	&	1	&	{ 41 }	&	79	&	6	&	{ 2 }	&	127	&	6	&	{ 2 }	&	163	&	27	&	{ 3 5 }	&	199	&	11	&	{ 6 8 9 10 11 }	\\
41	&	8	&	{ 2 }	&	79	&	26	&	{ 2 }	&	127	&	21	&	{ 3 5 }	&	163	&	81	&	{ 3 5 }	&	199	&	6	&	{ 2 }	\\
41	&	2	&	{ 2 }	&	79	&	78	&	{ 2 }	&	127	&	1	&	{ 127 }	&	163	&	6	&	{ 2 }	&	199	&	1	&	{ 199 }	\\
41	&	20	&	{ 2 }	&	79	&	3	&	{ 3 17 }	&	127	&	126	&	{ 2 }	&	163	&	54	&	{ 2 }	&	199	&	33	&	{ 3 5 }	\\
41	&	4	&	{ 2 }	&	83	&	1	&	{ 83 }	&	127	&	9	&	{ 3 }	&	163	&	3	&	{ 3 25 }	&	199	&	198	&	{ 2 }	\\\hline
\end{tabular}
\newpage

\tiny
\centering
\begin{tabular} {|c| c| c| c| c| c| c| c| c| c| c| c| c| c| c|}\hline
$p$	&	$|G|$	&	Gen. Set	&	$p$	&	$|G|$	&	Gen. Set	&	$p$	&	$|G|$	&	Gen. Set	&	$p$	&	$|G|$	&	Gen. Set	&	$p$	&	$|G|$	&	Gen. Set	\\\hline
43	&	1	&	{ 43 }	&	83	&	82	&	{ 2 }	&	127	&	63	&	{ 3 5 }	&	167	&	1	&	{ 167 }	&	199	&	66	&	{ 2 }	\\
43	&	3	&	{ 3 13 }	&	83	&	2	&	{ 2 }	&	127	&	3	&	{ 3 20 }	&	167	&	83	&	{ 3 5 }	&	199	&	3	&	{ 3 28 }	\\
43	&	6	&	{ 2 }	&	83	&	41	&	{ 3 5 }	&	127	&	7	&	{ 7 9 10 11 12 13 }	&	167	&	2	&	{ 2 }	&	199	&	18	&	{ 2 }	\\
43	&	7	&	{ 3 7 }	&	89	&	1	&	{ 89 }	&	127	&	14	&	{ 2 }	&	167	&	166	&	{ 2 }	&	199	&	22	&	{ 2 }	\\
43	&	2	&	{ 2 }	&	89	&	88	&	{ 2 }	&	127	&	18	&	{ 2 }	&	173	&	1	&	{ 173 }	&		&		&		\\\hline
\end{tabular}

\begin{thebibliography}{2}
\bibitem{krasner}
Marc Krasner,
\textit{A CLASS OF HYPERRINGS AND HYPERFIELDS}
Internat. J. Math. $\&$ Math. Sci. Vol. 6 No. 2 (1983) 307-312

\bibitem{magma}
Wieb Bosma, John Cannon, and Catherine Playoust,
\textit{ The Magma algebra system. I. The user language}
,J. Symbolic Comput., 24 (1997), 235–265.
\end{thebibliography}
\end{document}